
\documentclass[reqno]{structure} 

\usepackage{hyperref} 
\makeatletter \thm@headfont{\bfseries\scshape} \makeatother
\usepackage{latexsym,euscript}
\usepackage{amsbsy,amscd,amsfonts,amsmath,amsopn,amssymb,amstext,amsthm,amsxtra,}
\usepackage{epsf,epsfig,graphics,color,verbatim}
\usepackage{cite, float,multirow}
\usepackage{subfigure}
%
\theoremstyle{plain}
\newtheorem{theorem}{Theorem}[section]
\newtheorem{lemma}[theorem]{Lemma}
\newtheorem{corollary}[theorem]{Corollary}
\newtheorem{proposition}[theorem]{Proposition}

\theoremstyle{definition}
\newtheorem{definition}[theorem]{Definition}

\newtheorem*{example}{\sc{Example}} 

\newtheorem*{acknowledgement}{Acknowledgement}

%


%



%
\overfullrule=4pt
\pagespan{1}{2}
\DOI{10.2478/UDT-2020-0000}

\begin{document}

\title
[ON SOME PROPERTIES OF IRRATIONAL SUBSPACES]
{On some properties of irrational subspaces}

\author{Vasiliy Neckrasov 
        }
\affil{Moscow State University, Moscow, Russia}
\address{Vasiliy Neckrasov\\
         Department of Number Theory\\
         Faculty of Mechanics and Mathematics \\
         Moscow State University \\
         Vorobiovy Gory  \\
         Moscow 119992 \\
         Russia}
\email{vneckrasov@gmail.com}
\def\shortauthors{V. Neckrasov}               

\keywords{Badly approximable matrices, completely irrational subspaces, ($\alpha, \beta$)-games}
\subjclass{11J13}
\thanks {Supported by Basis Foundation under grant n. 20-8-2-12-1.}

\begin{abstract}
In this paper we discuss some properties of completely irrational subspaces. We prove that there exist completely irrational subspaces that are badly approximable and, moreover, sets of such subspaces are winning in different senses. We get some bounds for Diophantine exponents of vectors that lie in badly approximable subspaces that are completely irrational; in particular, for any vector $\xi$ from two-dimensional badly approximable completely irrational subspace of $\mathbb{R}^d$ one has $\hat{\omega}(\xi) \leq \frac{\sqrt{5} - 1}{2}$. Besides that, some statements about the dimension of subspaces generated by best approximations to completely irrational subspace easily follow from properties that we discuss.

\end{abstract}

\maketitle

\par 

\section{Introduction}

Our paper deals with some statements related to badly approximability of irrational subspaces in $\mathbb{R}^d$. Here in Introduction we give the definition of completely irrational linear subspace and discuss various aspects of Schmidt's games. In Section 2 we formulate and prove our statements about winning properties of the set of badly approximable completely irrational subspaces, and in Section 3 we give several applications.

\subsection{Irrational subspaces}

We consider linear subspaces in $\mathbb{R}^d$. Linear subspace $\mathcal{L}$ in $\mathbb{R}^d$ is called \textit{rational} if it has a basis (over $\mathbb{R}$) consisting only of vectors with integer coordinates. We define $n$-dimensional subspace $\mathcal{L}$ to be \textit{$m$-irrational} if its intersection with any $m$-dimensional rational subspace is just $ \{ 0 \}$, but $\mathcal{L}$ intersects some $(m+1)$-dimensional rational subspace. 

\begin{definition}

We call $n$-dimensional subspace $\mathcal{L}$ in $\mathbb{R}^d$, $n < d$, \textit{completely irrational} if it is $(d-n)$-irrational. 

\end{definition}

Here we should notice that $n$-dimensional $m$-irrational subspaces with $m>d-n$ do not exist.

A simple sufficient condition for $n$-dimensional subspace $\mathcal{L}$ to be completely irrational can be formulated in terms of Pl\"{u}cker coordinates. Namely, if Pl\"{u}cker coordinates of the subspace $\mathcal{L}$ $\subset$ $\mathbb{R}^d$ are lineary independent over $\mathbb{Q}$, then $\mathcal{L}$ is completely irrational. 

Indeed, let $\alpha^1 = (\alpha_1^1, ..., \alpha_d^1), ..., \alpha^n = (\alpha_1^n, ..., \alpha_d^n)$ be a basis of $n$-dimensional subspace $\mathcal{L}$ and integer vectors $B^1 = (b_1^1, ..., b_d^1), ..., B^n = (b_1^m, ..., b_d^m)$ form a basis of $m$-dimensional rational subspace $\mathcal{M}$, $m = d-n$. Subspaces $\mathcal{L}$ and $\mathcal{M}$ have a trivial intersection if and only if the matrix 

$$\Xi = 
\begin{pmatrix}
  \mathfrak{A} \\
  \mathfrak{B}
\end{pmatrix}
=
\begin{pmatrix}
  \alpha_1^1& ... & \alpha_d^1 \\
  ... & ... & ... \\
  \alpha_1^n& ... & \alpha_d^n \\
  b_1^1& ... & b_d^1 \\
  ... & ... & ... \\
  b_1^m& ... & b_d^m
\end{pmatrix}\!,
\text{where }
\mathfrak{A} = 
\begin{pmatrix}
  \alpha_1^1& ... & \alpha_d^1 \\
  ... & ... & ... \\
  \alpha_1^n& ... & \alpha_d^n 
\end{pmatrix}\!, 
\mathfrak{B} = 
\begin{pmatrix}
  b_1^1& ... & b_d^1 \\
  ... & ... & ... \\
  b_1^m& ... & b_d^m
\end{pmatrix}
$$
which rows are coordinates of basis vectors of subspaces $\mathcal{L}$ and $\mathcal{M}$ has maximal rank,  that is 
$$
{\rm rk}\, \Xi = n + m.
$$

In our case $n+m = d$, so it is equivalent to the condition ${ \rm det}\, \Xi \neq 0$. It is clear that ${ \rm det}\, \Xi$ is a linear combination of $n \times n$-minors of the submatrix $\mathfrak{A}$ with integer coefficients. These minors are just Pl\"{u}cker coordinates
$$
p_{i_1, ..., i_n}, \,\,\, 1 \leq i_1 < ... < i_n \leq d
$$
of the subspace $\mathcal{L}$; so, if they are linearly independent over $\mathbb{Q}$, for any rational subspace $M$ of dimension $m = d - n$  we have ${ \rm det}\, \Xi \neq 0.$

A similar sufficient condition may be formulated in a more general situation. Let $m+n \leq d$ and $I \subset \{1, 2, ..., d\}$ be a set consisting of $n+m$ elements. Consider a collection $S$ which consists of those Pl\"{u}cker coordinates $p_{i_1 ... i_n}$ of subspace $\mathcal{L}$ for which holds the condition $i_j \in I, \, \, j = 1, ..., n$. If $S$ consists of linearly independent over $\mathbb{Q}$ numbers, then subspace $\mathcal{L}$ is $m'$-irrational for some $m' \geq m$. 

\subsection{Schmidt's games and generalizations}

Here we briefly describe a classical game introduced by Schmidt. Basic notions and results can be found in \cite{Sch80}.

Let $0 < \alpha, \beta < 1 $. Suppose that two players Bob and Alice choose in turn a nested sequence of closed balls in $\mathbb{R}^d$:
$$
B_1 \supset A_1 \supset B_2 \supset ...
$$
with the property that the radii $\rho(A_i), \rho(B_i)$ of the balls $A_i$, $B_i$ satisfy
$$
\rho(A_i) = \alpha \rho(B_i), \, \, \rho(B_{i+1}) = \beta \rho(A_i) \, \, \, \text{for all } i = 1, 2, ... \, .
$$

A set $E \subset \mathbb{R}^d$ is called \textit{$(\alpha, \beta)$-winning} if Alice has a strategy guaranteeing that the intersection 
$$
  \bigcap\limits^{\infty}_{i=0} A_i
$$
belongs to $E$ no matter how Bob plays. A set $E \supset \mathbb{R}^d$ is called \textit{$\alpha$-winning} if it is $(\alpha, \beta)$-winning for all $0 < \beta < 1$.

The following useful results are due to Schmidt (links).

\begin{proposition}\label{cont}
Suppose that $2 \beta < 1 + \alpha \beta$. Then every $(\alpha, \beta)$-winning set has the power of the continuum. Moreover, for $\alpha > 0$ any $\alpha$-winning set has full Hausdorff dimension.
\end{proposition}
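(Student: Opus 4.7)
The plan is to prove both claims in the manner of Schmidt's original argument. For the cardinality claim, the approach is to fix a winning strategy $\sigma$ for Alice and to construct an injection $\{0,1\}^{\mathbb{N}}\hookrightarrow E$ by indexing a Cantor-like tree of Bob plays against $\sigma$. The key geometric ingredient is a bookkeeping of freedoms: at stage $i$, inside $A_i$ (radius $\alpha\rho(B_i)$) Bob may place the center of $B_{i+1}$ within a ball of radius $\alpha(1-\beta)\rho(B_i)$, and inside $B_{i+1}$ the center of Alice's $A_{i+1}$ is confined to a ball of radius $\alpha\beta(1-\alpha)\rho(B_i)$, with the same pattern repeating at every subsequent step scaled by $\alpha\beta$. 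Summing these two competing geometric series shows that Bob can drive the centers of two parallel plays apart in the limit by
\[
\frac{2\alpha(1-2\beta+\alpha\beta)}{1-\alpha\beta}\,\rho(B_i),
\]
while the ball radii $(\alpha\beta)^k\rho(B_i)$ shrink to $0$. The factor $1-2\beta+\alpha\beta$ is positive precisely when $2\beta<1+\alpha\beta$, so under the hypothesis two suitably chosen parallel plays end up with disjoint $B$-balls after finitely many moves and hence with distinct intersection points. Iterating this dichotomy at every node of a binary tree yields $2^{\aleph_0}$ distinct winning points in $E$, giving $|E|=\mathfrak{c}$.

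For the Hausdorff dimension claim I would work with small $\beta$ and build a self-similar Cantor subset $K_\beta\subseteq E$ whose dimension approaches $d$. Inside each Alice ball $A_i$ of radius $\alpha\rho$ one packs $N_\beta\gtrsim (1/\beta-1)^d$ pairwise disjoint Bob balls of radius $\alpha\beta\rho$; applying Alice's $(\alpha,\beta)$-winning strategy along each branch of the resulting $N_\beta$-ary tree produces a self-similar compact subset of $E$ whose Hausdorff dimension satisfies, by a standard mass-distribution argument,
\[
\dim_H K_\beta\;\ge\;\frac{\log N_\beta}{\log(1/(\alpha\beta))}\longrightarrow d\quad\text{as }\beta\to 0^+.
\]
Since this holds for all sufficiently small $\beta$, $\dim_H E=d$.

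The main obstacle is the calibration in the cardinality half: the hypothesis $2\beta<1+\alpha\beta$ is, for $\alpha>0$, strictly weaker than the naive condition $\beta\le\tfrac12$ under which Bob's single next move $B_{i+1}$ already admits two disjoint placements inside $A_i$. To cover the full range of the hypothesis one has to let Bob accumulate his additive freedom across infinitely many rounds while Alice accumulates her counter-freedom, and verify that the limiting net gap in the direction of desired separation is positive exactly under the stated inequality. Once this geometric bookkeeping is in hand the combinatorial Cantor-tree construction, and its $N$-ary refinement for the Hausdorff dimension statement, are routine.
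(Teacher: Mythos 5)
The paper does not actually prove this proposition: it is quoted as a result of Schmidt (see \cite{Sch80}, \cite{Sch1969}), so there is no in-paper argument to compare with; what the paper does state separately is the escaping lemma (Proposition \ref{esc}), and your cardinality half is in essence a re-derivation of exactly that lemma. Your bookkeeping is correct: Bob's per-round freedom is $\alpha(1-\beta)\rho(B_i)$ against Alice's counter-freedom $\alpha\beta(1-\alpha)\rho(B_i)$, so the net gain per round in a fixed direction is $\alpha(1-2\beta+\alpha\beta)\rho(B_i)=\alpha\gamma\rho(B_i)$, positive precisely under $2\beta<1+\alpha\beta$, and summing the geometric series with ratio $\alpha\beta$ gives your displayed separation; pushing two parallel plays in directions $u$ and $-u$ until the shrinking $B$-balls become disjoint, and iterating this at every node of a binary tree with Alice following her winning strategy along each branch, does give an injection of $\{0,1\}^{\mathbb{N}}$ into $E$. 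You also correctly identify that the only nontrivial regime is $\beta\geq\tfrac12$, where the split cannot be achieved in one move and the accumulated-freedom (escaping-lemma) mechanism is genuinely needed. The Hausdorff-dimension half is likewise the classical Schmidt construction: since $E$ is winning for every $\beta$, one builds for small $\beta$ an $N_\beta$-ary subtree with $N_\beta\gtrsim(1/\beta-1)^d$ Bob balls per Alice ball and ratio $\alpha\beta$ per level, and the lower bound $\log N_\beta/\log(1/(\alpha\beta))\to d$ follows. The one point you should make explicit there is that the mass-distribution (Frostman-type) lemma needs not just pairwise disjointness of the children balls but a definite separation comparable to their radii (so that a ball of radius $r$ meets boundedly many children of comparable size); this is easily arranged by packing balls of radius, say, $2\beta$ times the parent radius and playing inside their concentric halves, at the cost of a constant factor in $N_\beta$ which disappears in the limit $\beta\to 0^+$. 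With that caveat your outline is sound and is essentially Schmidt's original proof.
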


\begin{proposition}\label{int}
The intersection of countably many $\alpha$-winning sets is $\alpha$-winning.
\end{proposition}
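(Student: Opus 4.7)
Fix $\beta \in (0,1)$. Let $E_1, E_2, \ldots$ be $\alpha$-winning sets. My plan is to construct a winning strategy for Alice in the $(\alpha, \beta)$-game targeting $E := \bigcap_{k \geq 1} E_k$ by simulating, inside the single main game, a countable family of auxiliary sub-games, one aimed at each $E_k$.

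First I would partition the positive integers into pairwise disjoint infinite arithmetic progressions $I_1, I_2, \ldots$; a convenient choice is $I_k = \{i \geq 1 : \nu_2(i) = k-1\}$, which has common difference $2^k$. The key computation is that for consecutive elements $i < i'$ of $I_k$ in any play of the main $(\alpha,\beta)$-game one has $\rho(A_i) = \alpha\rho(B_i)$ and
$$
\frac{\rho(B_{i'})}{\rho(A_i)} = \frac{(\alpha\beta)^{2^k}}{\alpha} = \alpha^{2^k-1}\beta^{2^k} =: \beta_k \in (0,1).
$$
Hence the subsequence $(B_i, A_i)_{i \in I_k}$ extracted from the main play is itself a legitimate play of an $(\alpha,\beta_k)$-game.

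Next, since $E_k$ is $\alpha$-winning, Alice has a winning strategy $\sigma_k$ in the $(\alpha,\beta_k)$-game targeting $E_k$. Alice's composite strategy in the main game is: at round $i$, identify the unique $k$ with $i \in I_k$ and produce $A_i$ by feeding $\sigma_k$ the sub-game history $(B_j)_{j \in I_k,\, j \leq i}$. Pairwise disjointness of the $I_k$ rules out any conflict, and the radius demanded by $\sigma_k$ is exactly the $\alpha\rho(B_i)$ forced by the main game.

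Finally, $\sigma_k$ being winning in the sub-game for $E_k$ gives $\bigcap_{i \in I_k} A_i \in E_k$; but since the full sequence $A_1 \supset A_2 \supset \cdots$ is nested with radii tending to $0$, this intersection coincides with $\bigcap_{i \geq 1} A_i$, a single point independent of $k$. That point therefore lies in every $E_k$, and hence in $E$, as required. The main obstacle to set up cleanly is the verification that the extracted subsequence is really a valid $(\alpha,\beta_k)$-play, which is handled by the radius computation above; a secondary subtlety is that the initial ball $B_{\min I_k}$ of sub-game $k$ is chosen by Bob rather than Alice, but this is harmless because $\alpha$-winningness of $E_k$ supplies a strategy $\sigma_k$ that works uniformly from any starting configuration.
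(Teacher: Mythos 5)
Your proposal is correct: the dyadic partition $I_k=\{i:\nu_2(i)=k-1\}$ does make the extracted subsequence a legitimate $(\alpha,\beta_k)$-play with $\beta_k=\alpha^{2^k-1}\beta^{2^k}$, the containments $B_{i'}\subset A_i$ hold by nestedness of the main game, and since the radii shrink geometrically the single intersection point lies in every $E_k$. This is essentially Schmidt's classical interleaving argument, which the paper only cites (Proposition \ref{int}) without proof, so there is nothing to reconcile beyond noting that your write-up matches the standard proof.
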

The main tool that was used in Schmidt's proof is the following
\begin{proposition}{(Schmidt's escaping lemma.)}\label{esc}
Suppose that $0 < \alpha, \beta < 1$ and $\gamma = 1 + \alpha \beta - 2 \beta > 0$. Let $t$ be an integer with $(\alpha \beta)^t < \frac{1}{2} \gamma$ and $u$ a vector of length 1. Suppose a ball $A_k$ occurs in the $(\alpha, \beta)$-game. Then Bob can play so that (no matter how Alice plays) every point $x$ of $A_{k+t}$ satisfies 
$$
((x - o_k), u) > \frac{1}{2} \gamma \rho(A_k)
$$
where $o_k$ denotes the center of $A_k$.
\end{proposition}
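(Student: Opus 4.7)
The plan is to exhibit an explicit strategy for Bob on the rounds $k+1,\ldots,k+t$ and then track the drift of the centers along the unit vector $u$. Write $\rho_i = \rho(A_i)$ and let $o_i$ be the center of $A_i$, so that $\rho_{k+i} = (\alpha\beta)^i \rho(A_k)$. The target inequality $\langle x-o_k,u\rangle > \tfrac12\gamma\rho(A_k)$ for every $x\in A_{k+t}$ will follow from a single lower bound on $\langle o_{k+t}-o_k,u\rangle$ together with the radius estimate for $A_{k+t}$.

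I would have Bob play greedily: once Alice has chosen $A_{k+i}$, Bob takes $B_{k+i+1}$ to be the ball of radius $\beta\rho_{k+i}$ whose center is shifted maximally in direction $u$, namely
\[
\tilde o_{k+i+1} \;=\; o_{k+i} + (1-\beta)\rho_{k+i}\,u.
\]
This is admissible because $|\tilde o_{k+i+1}-o_{k+i}| + \beta\rho_{k+i} = \rho_{k+i}$. Whatever Alice answers, the center $o_{k+i+1}$ satisfies $|o_{k+i+1}-\tilde o_{k+i+1}|\leq (1-\alpha)\beta\rho_{k+i}$; her worst choice is to shift fully along $-u$, producing the one-step estimate
\[
\langle o_{k+i+1}-o_{k+i},u\rangle \;\geq\; (1-\beta)\rho_{k+i} - (1-\alpha)\beta\rho_{k+i} \;=\; \gamma\rho_{k+i}.
\]

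Summing this telescoping inequality over $i=0,\ldots,t-1$ and using $\sum_{i=0}^{t-1}(\alpha\beta)^i \geq 1$ (which holds since $\alpha\beta\leq 1$ and $t\geq 1$), I obtain $\langle o_{k+t}-o_k,u\rangle \geq \gamma\rho(A_k)$. Any $x \in A_{k+t}$ lies within $(\alpha\beta)^t\rho(A_k)$ of $o_{k+t}$, so
\[
\langle x-o_k,u\rangle \;\geq\; \gamma\rho(A_k) - (\alpha\beta)^t\rho(A_k) \;>\; \tfrac12\gamma\rho(A_k),
\]
where the strict inequality uses exactly the hypothesis $(\alpha\beta)^t < \tfrac12\gamma$.

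The only delicate point will be ensuring that Bob's greedy shift genuinely dominates Alice's counter-shift at every scale, so that the displacement accumulates rather than oscillates; this reduces to $(1-\beta) > \beta(1-\alpha)$, i.e.\ $\gamma > 0$, which is precisely the standing assumption. The residual term $(\alpha\beta)^t\rho(A_k)$, which accounts for $x$ being off-center in $A_{k+t}$, is absorbed by the factor of two built into the conclusion, and this is why the bound must be $(\alpha\beta)^t < \gamma/2$ rather than merely $<\gamma$.
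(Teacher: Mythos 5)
Your proof is correct: the greedy strategy of pushing the center of $B_{k+i+1}$ by $(1-\beta)\rho_{k+i}$ along $u$, the worst-case counter-shift of $(1-\alpha)\beta\rho_{k+i}$ by Alice, the telescoping drift $\geq\gamma\rho(A_k)$, and the absorption of the off-center term via $(\alpha\beta)^t<\tfrac12\gamma$ together give exactly the claimed bound. The paper itself gives no proof of this proposition (it cites Schmidt), and your argument is precisely the classical one from Schmidt's work, so there is nothing to reconcile.
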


There are various modifications of Schmidt's game. One of them, the hyperplane absolute winning game, was introduced in \cite{HAW}.

Fix $\beta < \frac{1}{3}.$ Firstly, Bob chooses a closed ball $B_1 = B_1(x_1, \rho_1)$ with center $x_1$ and radius $\rho_1$ in $\mathbb{R}^d$. Then in each stage of the game, after Bob chooses $B_i$, Alice chooses an affin subspace $L$ of dimension $d-1$ and removes its $\varepsilon$-neighbourhood $A_i$ from $B_i$, where $0 < \varepsilon < \beta \rho_i$ (and $\varepsilon$ can be different depending on $i$). Then Bob chooses the next ball $B_{i+1}$ with radius $\rho_{i+1} \geq \beta \rho_i$ and under condition
$$
B_{i+1} \subset B_i \setminus A_i.
$$
The set $S$ is said to be \textit{hyperplane $\beta$-absolute winning} if Alice has a strategy guaranteeing that 
$$
  \bigcap\limits^{\infty}_{i=0} B_i
$$
intersects $S$.
We say $S$ is \textit{hyperplane 
absolute winning} if it is \textit{hyperplane $\beta$-absolute winning} for every $0 < \beta < \frac{1}{3}$.

Analogous to Schmidt's game, HAW property holds under intersection of sets (the proof is given in \cite{HAW}).
\begin{proposition} \label{hawintersect}
The countable intersection of hyperplane absolute winning sets is hyperplane absolute winning.
\end{proposition}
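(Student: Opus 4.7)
The plan is to interleave Alice's individual winning strategies for the $S_n$. Fix $\beta < 1/3$, and choose an enumeration $(n_k)_{k \ge 1}$ of the positive integers in which every $n$ appears infinitely often with bounded gaps $c_n$ between consecutive occurrences; a convenient choice is $n_k = \nu_2(k) + 1$, where $\nu_2$ denotes the $2$-adic valuation, which gives $c_n = 2^n$. Set $\beta_n = \beta^{c_n} \in (0, 1/3)$; by the HAW hypothesis, let $\sigma_n$ be Alice's winning strategy in the $\beta_n$-HAW game for $S_n$.

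In the main $\beta$-HAW game for $\bigcap_n S_n$, Alice plays as follows: at step $k$, with $n = n_k$, she extracts the sub-history of those earlier steps where the index was $n$, namely the balls $B^{(n)}_1, \ldots, B^{(n)}_m = B_k$ produced by Bob at those steps and her own moves $A^{(n)}_1, \ldots, A^{(n)}_{m-1}$ at those steps, and she plays the pair $(L, \varepsilon)$ prescribed by $\sigma_n$ on this sub-history. The move is legal in the main game since $\varepsilon < \beta_n \rho_k \le \beta \rho_k$.

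The central verification is that for each $n$ the extracted sub-history is a legal run of the $\beta_n$-HAW game for $S_n$. Nesting and the inclusions $B^{(n)}_{m+1} \subset B^{(n)}_m \setminus A^{(n)}_m$ are inherited from the main game. For the radius condition, consecutive occurrences of $n$ in $(n_k)$ are at most $c_n$ steps apart in the main game, and each main-game transition satisfies $\rho_{i+1} \ge \beta \rho_i$, hence $\rho(B^{(n)}_{m+1}) \ge \beta^{c_n} \rho(B^{(n)}_m) = \beta_n \rho(B^{(n)}_m)$. Since $\sigma_n$ is winning and $\{B^{(n)}_m\}_m$ is a subsequence of the nested sequence $\{B_i\}_i$, we have $\bigcap_m B^{(n)}_m = \bigcap_i B_i$, so $\bigcap_i B_i \cap S_n \neq \emptyset$ for every $n$.

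The step I expect to require the most care is collapsing this family of non-emptiness assertions into a single common point, i.e., showing $\bigcap_i B_i \cap \bigcap_n S_n \neq \emptyset$. This is automatic once $\bigcap_i B_i$ is known to be a singleton. To arrange this, I would reserve a bounded-density subset of the steps for \emph{shrinking moves} on which Alice plays a hyperplane through the center of $B_i$ with $\varepsilon$ close to $\beta \rho_i$; a short geometric computation using $\beta < 1/3$ gives $\rho(B_{i+1}) < (1-\beta)\rho_i/2 < \rho_i/2$ after such a move, so $\rho_i \to 0$ and $\bigcap_i B_i = \{x\}$. Inserting these moves merely enlarges the gaps $c_n$ by a constant factor, so all previous estimates go through, and the already-established relations $x \in S_n$ for every $n$ combine to give $x \in \bigcap_n S_n$, completing the argument.
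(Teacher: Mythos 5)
The paper does not prove Proposition \ref{hawintersect} itself --- it is quoted from \cite{HAW} --- and your interleaving argument is correct and is essentially the standard splitting proof from that reference: the extracted sub-histories are legal $\beta^{c_n}$-games because each main-game step shrinks radii by at most a factor $\beta$, Alice's sub-game moves are legal in the main game since $\beta_n\le\beta$, and you correctly identify the one delicate point (that $\bigcap_i B_i$ need not be a singleton) and fix it with interleaved shrinking moves, which work because any ball contained in $B_i$ minus an $\varepsilon$-slab through its center has radius at most $(\rho_i-\varepsilon)/2<\rho_i/2$ while $\beta<\frac13$ still leaves Bob a legal move. Two cosmetic points: the schedule (hence the enlarged gaps and the parameters $\beta_n$) should be fixed before the strategies $\sigma_n$ are invoked, and the shrinking estimate is $\rho_{i+1}\le(\rho_i-\varepsilon)/2$, which with $\varepsilon<\beta\rho_i$ gives $\rho_{i+1}<\rho_i/2$ rather than $(1-\beta)\rho_i/2$; neither affects the argument.
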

An important fact connecting $\alpha$-winning sets in Schmidt's game and HAW sets is the following 
\begin{proposition}
HAW implies $\alpha$-winning for $\alpha < \frac{1}{2}$. 
\end{proposition}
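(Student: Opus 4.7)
The plan is to turn a winning strategy in the HAW game into an $\alpha$-winning strategy in Schmidt's game by running the two games in parallel. Fix $\alpha < \tfrac12$ and let $\beta \in (0,1)$ be the parameter chosen by Bob in the Schmidt game. Pick a positive
$$
\beta_0 < \min\!\left\{\tfrac13,\ 1-2\alpha,\ \alpha\beta\right\}\!,
$$
which is possible because each term on the right is strictly positive. Since $S$ is HAW, Alice has a winning strategy in the hyperplane $\beta_0$-absolute game; call it $\sigma$.

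Alice plays the Schmidt game as follows. When Bob produces a ball $B_i$ of radius $\rho_i$, she declares $B_i$ to be the $i$-th Bob-ball in a shadow HAW-game, and consults $\sigma$ to obtain an affine hyperplane $L_i$ and some $\varepsilon_i \in (0, \beta_0 \rho_i)$. She then selects her Schmidt ball $A_i \subset B_i$ of radius $\alpha \rho_i$ so that its center is placed in $B_i$ as far from $L_i$ as possible; the geometric claim (see next paragraph) is that this $A_i$ lies entirely outside the $\varepsilon_i$-neighbourhood of $L_i$. Bob then responds with $B_{i+1} \subset A_i$ of radius $\alpha\beta\rho_i$. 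Because
$$
B_{i+1} \subset A_i \subset B_i \setminus \{x : \mathrm{dist}(x, L_i) < \varepsilon_i\},
$$
and $\rho(B_{i+1}) = \alpha\beta\rho_i \geq \beta_0 \rho_i$, this $B_{i+1}$ is a legal Bob-move in the shadow HAW-game. By induction the shadow game is well-defined and $\sigma$-compliant, hence the single point $\bigcap_i A_i = \bigcap_i B_i$ belongs to $S$, and Alice wins the Schmidt $(\alpha,\beta)$-game.

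The one geometric step to verify is the existence of the ball $A_i$. Letting $c$ denote a candidate center of $A_i$, we need $|c - o_i| \le (1-\alpha)\rho_i$ (so that $A_i \subset B_i$) together with $\mathrm{dist}(c, L_i) \ge \alpha\rho_i + \varepsilon_i$ (so that $A_i$ avoids the $\varepsilon_i$-tube). Placing $c$ on the segment from $o_i$ in the direction away from $L_i$, at distance $(1-\alpha)\rho_i$ from $o_i$, we achieve distance $\mathrm{dist}(o_i, L_i) + (1-\alpha)\rho_i$ from $L_i$, which is at least $(1-\alpha)\rho_i$. It therefore suffices that
$$
(1-\alpha)\rho_i \ge \alpha\rho_i + \varepsilon_i,
$$
i.e.\ $\varepsilon_i \le (1-2\alpha)\rho_i$, which holds by our choice of $\beta_0$.

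The only delicate point is this geometric lemma, together with the bookkeeping that $\beta_0$ can simultaneously satisfy the three inequalities required (valid HAW parameter, enough room to tuck $A_i$ away from $L_i$, and the Schmidt contraction $\alpha\beta$ not too small for HAW). All three follow from $\alpha < \tfrac12$ and the freedom to take $\beta_0$ arbitrarily small; everything else is a routine consistency check between the two game frameworks.
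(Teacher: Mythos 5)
Your argument is correct. The paper itself gives no proof of this proposition (it is imported from the literature on the hyperplane absolute game), and your simulation argument is essentially the standard one: run a shadow HAW-game with parameter $\beta_0 < \min\{\tfrac13,\,1-2\alpha,\,\alpha\beta\}$, answer each Bob ball $B_i$ by a Schmidt ball $A_i$ of radius $\alpha\rho_i$ pushed to distance $(1-\alpha)\rho_i$ from the center along the normal away from Alice's hyperplane $L_i$, which avoids the $\varepsilon_i$-neighbourhood precisely because $\varepsilon_i < \beta_0\rho_i \le (1-2\alpha)\rho_i$, while $\rho(B_{i+1})=\alpha\beta\rho_i \ge \beta_0\rho_i$ keeps Bob's shadow moves legal; since the Schmidt radii tend to zero, the singleton $\bigcap_i B_i$ must be the point of $S$ guaranteed by the HAW strategy. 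The geometric placement, the three parameter constraints, and the final identification of the intersection point are all checked correctly, so there is nothing to fix.
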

It means that in some sense HAW-game is "more strong" then Schmidt's game, but for some $\alpha$ and $\beta$ it may happen that $(\alpha, \beta)$-winning property does not follow from HAW, so in our paper we consider both HAW game and Schmidt's game independently.

\subsection{Badly approximable subspaces}

Badly approximable systems of linear forms are defined as follows.
Let
$$
L_j(x) = \theta_j^1x_1 + ... + \theta_j^nx_n, \, \, \,j = 1, ..., m,
$$
where $x \in \mathbb{R}^n$ is a vector with coordinates $ x_1, ..., x_n$, be a system of linear forms with real coefficients and suppose there exists a constant $c = c(L_1, ..., L_m) > 0$ such that
$$
(\max(|x_1|, ..., |x_n|))^n (\max(\parallel L_1(x) \parallel, ..., \parallel L_m(x) \parallel))^m > c
$$
where $\parallel \xi \parallel$ denotes the distance from $\xi$  to the nearest integer. Then this system is called badly approximable.

Badly approximable system of linear forms corresponds to the matrix $\Theta = (\theta^i_j)$. As in the paper ~\cite{Sch1969}, we denote the set of matrices of badly approximable linear forms by $N(n, m)$.

\begin{proposition}{(Schmidt, 1969.)}\label{badwin}
The set $N(n, m)$ is $(\alpha, \beta)$-winning if 
\begin{equation}
2 \alpha < 1 + \alpha \beta. \label{ineq}
\end{equation}
In particular $N(n,m)$ is $\alpha$-winning for $\alpha \leq \frac{1}{2}$.
\end{proposition}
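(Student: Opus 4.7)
The plan is to derive an \emph{Alice-version} of Schmidt's escaping lemma (Proposition \ref{esc}) and then apply it inductively to clear the arithmetic obstructions one scale at a time. By swapping the roles of the two players in the proof of Proposition \ref{esc}, under the hypothesis $\gamma := 1 + \alpha\beta - 2\alpha > 0$, which is exactly \eqref{ineq}, for any unit vector $u \in \mathbb{R}^{mn}$, any point $o$, and any ball $B_k$ arising in the $(\alpha,\beta)$-game, Alice has a strategy lasting a universal number $t = t(\alpha,\beta)$ of rounds so that every point $x$ of the resulting ball satisfies
$$
(x - o, u) > \tfrac{1}{2}\gamma \, \rho(B_k).
$$
This lets her push the play entirely to one side of any prescribed hyperplane by a macroscopic distance.

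Next I would identify the matrix $\Theta = (\theta_j^i)$ with a point of $\mathbb{R}^{mn}$. For nonzero $q \in \mathbb{Z}^n$ and $p \in \mathbb{Z}^m$, the set of matrices failing the badly approximable inequality with constant $c$ because of $(q,p)$ is
$$
\bigcap_{j=1}^m \bl\{ \Theta \, : \, |L_j(q) - p_j| < (c/|q|^n)^{1/m} \br\},
$$
a slab of half-width $(c/|q|^n)^{1/m}$ around the codimension-$m$ affine subspace cut out by $L_j(q) = p_j$ for all $j$. To avoid this slab it suffices for Alice to escape from \emph{one} of its $m$ defining hyperplanes by more than the half-width. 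Her strategy is therefore: group the nonzero $q$ by dyadic scales $|q|_\infty \in [2^s, 2^{s+1})$, match each scale $s$ to a consecutive block of game rounds whose starting ball radius $\rho_s$ stands in an appropriate geometric ratio to $2^{-sn/m}$, and at each such block apply the Alice-escape once per slab that meets the current ball. Choosing $c = c(\alpha,\beta,n,m)$ sufficiently small guarantees that $\tfrac{1}{2}\gamma \rho_s$ exceeds the slab half-width $(c/|q|^n)^{1/m}$, so each escape decisively pushes subsequent play off the slab in question.

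The main technical obstacle is the bookkeeping: one must verify that at each scale $s$ only a bounded number of $(q,p)$-slabs actually meets the current ball, so that the block of rounds allotted to scale $s$ can be completed in bounded time. This reduces to a counting argument — for $|q| \sim 2^s$ the relevant $p$ are those of norm at most a constant times $|q| \cdot \rho_s$, and the correct calibration of the scaling $\rho_s \leftrightarrow 2^s$ keeps this count bounded independently of $s$. Once this is carried out, every nonzero $q$ is treated at some stage, so the unique intersection point $\Theta$ is badly approximable with constant $c$, proving that $N(n,m)$ is $(\alpha,\beta)$-winning. For the final assertion, if $\alpha \le \tfrac{1}{2}$ then $2\alpha \le 1 < 1 + \alpha\beta$ for every $\beta \in (0,1)$, so $N(n,m)$ is $(\alpha,\beta)$-winning for all such $\beta$, i.e.\ it is $\alpha$-winning.
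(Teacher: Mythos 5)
Your opening moves are fine: the paper itself gives no proof of Proposition \ref{badwin} (it is quoted from Schmidt's 1969 paper), and the role-swapped escaping lemma you start from — Alice can force the play to one side of any prescribed hyperplane by $\tfrac12\gamma\rho(B_k)$ in a bounded number $t(\alpha,\beta)$ of rounds, provided $\gamma=1+\alpha\beta-2\alpha>0$, i.e.\ exactly under (\ref{ineq}) — is correct and is the right tool (it is the mirror image of Proposition \ref{esc}, and the same mechanism the paper itself exploits in Lemma \ref{manesc}). The reduction ``to leave the slab it suffices to escape one of its $m$ defining hyperplanes'' is also sound, up to a units slip: as a subset of $\mathbb{R}^{mn}$ the slab around $\{\Theta q=p\}$ has half-width about $(c/|q|^n)^{1/m}/|q|$, not $(c/|q|^n)^{1/m}$, since the distance from $\Theta$ to the hyperplane $\{\Theta_j\cdot q=p_j\}$ is $|\Theta_j\cdot q-p_j|/|q|$; this only shifts the calibration to $\rho_s\asymp 2^{-s(n+m)/m}$ and is harmless.

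The genuine gap is precisely the step you dismiss as bookkeeping: the claim that at each dyadic scale only a \emph{bounded} number of $(q,p)$-slabs meets the current ball. This is not a routine count, and as stated it is false without further input. At scale $|q|_\infty\sim 2^s$ there are about $2^{sn}$ admissible $q$, and for a given $q$ a slab meets $B(\Theta_0,\rho_s)$ as soon as $\Theta_0q$ lies within roughly $|q|\rho_s$ of some integer point of $\mathbb{R}^m$ (your description ``$p$ of norm at most a constant times $|q|\rho_s$'' is also not the right condition). The product $2^{sn}(2^s\rho_s)^m=O(1)$ under your calibration is an average-case heuristic, not a pointwise bound: if, say, the current ball is centered near a rational matrix with denominator $Q$, then every $q\in Q\mathbb{Z}^n$ at scale $2^s$ — about $(2^s/Q)^n$ of them — produces a subspace passing essentially through the center, so the number of slabs Alice would have to clear in one block is unbounded, while each escape costs $t$ rounds and a factor $(\alpha\beta)^t$ in radius, so an unbounded count destroys the scheme. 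Excluding such configurations requires converting the inductive information (all lower-scale slabs have already been escaped by a definite margin) into a uniform bound on, or structural control of, the dangerous pairs at the current scale; this is exactly the nontrivial core of Schmidt's 1969 proof, which handles the possibility of many nearly coincident dangerous subspaces by a considerably more elaborate argument than one escape per slab. Until that is supplied, the proof is incomplete at its central point; the concluding deduction that $\alpha\le\tfrac12$ gives $\alpha$-winning is, of course, correct.
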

The following result was proved by Broderick, Fishman, Simmons in \cite{hawbad}.
\begin{proposition} \label{nhaw}
The set $N(n, m)$ is HAW.
\end{proposition}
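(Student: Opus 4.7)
The plan is to construct an explicit HAW strategy for Alice that forces the point of $\bigcap_{i} B_{i}$ to lie in a sublevel set $N_{c}(n,m) \subset N(n,m)$, where $N_{c}(n,m)$ denotes the set of matrices satisfying the badly approximable inequality with constant $c$. Since HAW is inherited by supersets, it suffices, for each fixed $\beta < 1/3$, to produce some $c = c(\beta) > 0$ and a winning strategy for Alice in the hyperplane $\beta$-absolute game with target $N_{c}(n,m)$. The key geometric reformulation is as follows: for each nonzero $x \in \mathbb{Z}^{n}$, each $p \in \mathbb{Z}^{m}$ and each index $j \in \{1, \ldots, m\}$, the equation $\theta_{j} \cdot x = p_{j}$ is a nontrivial linear constraint on the $mn$ entries of $\Theta$, so it defines a genuine affine hyperplane $H_{x,p,j} \subset \R^{mn}$. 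One then checks that $\Theta \in N_{c}(n,m)$ if and only if for every nonzero $x$ there exists an index $j$ with $\operatorname{dist}(\Theta, H_{x,p,j}) > c^{1/m} |x|^{-(n+m)/m}$, where $p_{j}$ is the nearest integer to $\theta_{j} \cdot x$. The crucial point is that Alice needs only to push $\Theta$ away from \emph{one} hyperplane per nonzero $x$, which matches the HAW rule of removing one hyperplane neighborhood per round.

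Fix $\beta < 1/3$ and group the rounds of the game into epochs indexed by a geometric scale $T_{k} = T_{0} \mu^{k}$, with $\mu = \mu(\beta) > 1$ chosen in sync with Bob's radius schedule $\rho_{i} \geq \beta^{i-1} \rho_{1}$. In the $k$-th epoch Alice processes the nonzero vectors $x \in \mathbb{Z}^{n}$ with $|x| \in (T_{k-1}, T_{k}]$ for which some hyperplane $H_{x,p,j}$ comes within the critical distance $c^{1/m} |x|^{-(n+m)/m}$ of Bob's current ball. For each such $x$ she selects an appropriate index $j$, takes the corresponding hyperplane $H_{x,p,j}$, and removes its $\varepsilon$-neighborhood with $\varepsilon = c^{1/m} |x|^{-(n+m)/m}$. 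Provided $c$ is small enough in terms of $\beta$ and $T_{0}$, the inequality $\varepsilon < \beta \rho_{i}$ holds uniformly, so the move is legal.

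The crux is a counting lemma asserting that at each scale $T_{k}$ only a bounded number $N = N(\beta, n, m)$ of triples $(x, p, j)$ can be simultaneously dangerous for any ball of radius comparable to $\rho_{k}$. This is the HAW analogue of the lattice-point bound underlying Proposition~\ref{badwin}, and it should be proved by a Minkowski-style pigeonhole: two dangerous hyperplanes through the same small ball force a near-trivial relation $\theta_{j} \cdot (x - x') = p_{j} - p_{j}'$ with small error, confining $(x - x', p - p')$ to a symmetric convex body of bounded volume and hence to an explicit finite set. Granted this lemma, Alice schedules at most $N$ rounds per epoch (the finitely many low-scale dangerous vectors are absorbed into the initial choice of $c$), and the nested intersection $\bigcap_{i} B_{i}$ lies inside $N_{c}(n,m)$. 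The main obstacle will be proving this counting lemma while simultaneously tuning $c$, $T_{0}$, and $\mu$ so that the three constraints $\varepsilon < \beta \rho$, ``at most $N$ dangerous triples per epoch'', and ``every nonzero integer vector eventually lies in some epoch'' are mutually compatible for every admissible $\beta$; once these parameters are fixed, the HAW property of $N(n,m)$ follows at once.
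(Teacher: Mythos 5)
First, note that the paper does not prove this proposition at all: it is quoted as an external result of Broderick--Fishman--Simmons \cite{hawbad}, so your proposal is being measured against the published proof there, which is substantially more involved than your sketch.

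The genuine gap is your counting lemma, which is false as you state it. Take $n=2$, $m=1$ and let Bob steer his ball toward a rational point $\Theta_0=(p_1/q,p_2/q)$: every $x\in\mathbb{Z}^2$ with $x_1p_1+x_2p_2\equiv 0 \pmod q$ gives a hyperplane $H_{x,p}$ passing exactly through $\Theta_0$, and there are $\gg T^2/q$ such $x$ with $|x|\asymp T$. All of these are ``dangerous'' for a ball of radius comparable to $T^{-(n+m)/m}$ centered near $\Theta_0$, so no bound $N(\beta,n,m)$ independent of the scale exists for arbitrary balls; the most one can hope for is a statement conditional on the ball having survived Alice's moves at all earlier scales, and proving such a conditional statement is precisely where the real work lies. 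Your Minkowski-style justification cannot repair this: the relation $\theta_j\cdot(x-x')\approx p_j-p_j'$ confines $(x-x',p-p')$ to a thin slab of bounded \emph{volume}, but bounded volume gives no upper bound on the number of lattice points in a slab (a slab aligned with a rational subspace contains arbitrarily many), which is exactly the degenerate configuration above. This is why the actual arguments (Schmidt's for the $(\alpha,\beta)$-winning property, and \cite{hawbad} for HAW) do not run on a bounded count of dangerous hyperplanes per scale; they exploit structural facts --- e.g.\ that the dangerous hyperplanes relevant at a given scale cluster inside a neighborhood of a single hyperplane, or an induction on the dimension parameters, together with careful bookkeeping of which row index $j$ is used --- and your phrase ``she selects an appropriate index $j$'' hides a second nontrivial issue, since only the maximum over $j$ needs to be large and the choice must be made consistently across scales. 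There is also the standard technicality that in the hyperplane absolute game Bob's radii need not tend to zero, so epochs indexed by a geometric scale do not automatically correspond to rounds, but that is minor compared with the failure of the counting lemma.
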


It is well known that the set $N(n, m)$ has zero Lebesgue measure in $\mathbb{R}^{mn}$ (for example, this fact follows from Khintchine-Groshev theorem  \cite{gros}), while Schmidt was the first who proved by means of Proposition \ref{cont} that $N(n, m)$ has full Hausdorff dimension (~\cite{Sch1969}, Theorem 1).

We call $n$-dimensional linear subspace $\mathcal{L}$ in $\mathbb{R}^d$ \textit{badly approximable} if we can choose coordinates $x_1, ..., x_n; y_1, ..., y_m$ in $\mathbb{R}^d$, $m+n = d$, in such a way that $\mathcal{L}$ is defined by an equation 
\begin{equation}
y = \Theta x, \label{theta}
\end{equation}
where $\Theta$ is $m \times n$ matrix of badly approximable system of linear forms.

It is clear that $\mathcal{L}$ is badly approximable if and only if 

$$
\inf \limits_{x \in \mathbb{Z}^d \setminus \{0\}} (\max(|x_1|, ..., |x_n|))^n { \rm dist}( \mathcal{L}, x)^m > 0
$$
where ${ \rm dist}(\mathcal{L}, x)$ denotes (Euclidean) distance between subspace $\mathcal{L}$ and $x$.

\section{Badly approximability and irrationality}

\subsection{Formulations}

Talking about completely irrational subspaces, we didn't prove their existance explicitly. In fact even a stronger statement takes place.
\begin{theorem}\label{irrwin}
 Suppose that $\alpha$ and $\beta$ satisfy inequality (\ref{ineq}). Then the set $I(n,m)$ of all $n \times m$-matrices $\Theta$ defining $n$-dimensional completely irrational subspace of the form $(\ref{theta})$ in $\mathbb{R}^{n+m}$ is $(\alpha, \beta)$-winning. In particular $I(n,m)$ is $\alpha$-winning for $\alpha \leq \frac{1}{2}$.
\end{theorem}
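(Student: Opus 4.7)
The plan is to adapt Schmidt's proof of Proposition~\ref{badwin} for $N(n,m)$: I would construct Alice's strategy blockwise and apply the escape lemma (Proposition~\ref{esc}) in each block to kill off one rational $m$-dimensional subspace at a time, working in the version where Alice is the escaping player so that the hypothesis is $1 + \alpha\beta - 2\alpha > 0$, exactly the assumption of the theorem.

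First, I would rephrase the forbidden sets in coordinates. If $\mathcal{M}$ is an $m$-dimensional rational subspace whose integer basis vectors form the rows of $\mathfrak{B} = [B_1 \mid B_2]$, with $B_1 \in \mathrm{Mat}_{m \times n}(\mathbb{Z})$ and $B_2 \in \mathrm{Mat}_{m \times m}(\mathbb{Z})$, then a Schur-complement computation on $\det\begin{pmatrix} I_n & \Theta^T \\ B_1 & B_2 \end{pmatrix}$ gives
\[
\mathcal{L}_\Theta \cap \mathcal{M} \neq \{0\} \iff q_{\mathcal{M}}(\Theta) := \det\bigl(B_2 - B_1 \Theta^T\bigr) = 0,
\]
and $q_\mathcal{M}$ is a nonzero polynomial by generic transversality (one can always find some $\Theta$ for which $\mathcal{L}_\Theta$ is transverse to a given $\mathcal{M}$). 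Hence $I(n,m)$ is the complement in $\mathbb{R}^{nm}$ of a countable union of algebraic hypersurfaces $V_\mathcal{M} = \{q_\mathcal{M} = 0\}$. Enumerate $\mathcal{M}_1, \mathcal{M}_2, \ldots$ and fix $t \in \mathbb{N}$ with $(\alpha\beta)^t < \tfrac{1}{2}\gamma$, where $\gamma = 1 + \alpha\beta - 2\alpha$. Partition Alice's play into consecutive blocks of length $t$, with block $k$ devoted to $\mathcal{M}_k$. At the start of block $k$ she picks a unit vector $u_k$ and applies the escape lemma for the next $t$ moves, so that the ball ending the block lies entirely in the half-space $H_k = \{\Theta : (\Theta - o_k, u_k) > \tfrac{1}{2}\gamma \rho(A_k)\}$. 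All subsequent balls are nested in $H_k$; if $u_k$ is chosen so $H_k \cap V_{\mathcal{M}_k} = \emptyset$, the subspace $\mathcal{M}_k$ is permanently eliminated.

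The main difficulty is producing such a $u_k$: $V_{\mathcal{M}_k}$ is a curved algebraic hypersurface, not an affine hyperplane, so no single half-space misses it at every scale. I would pick a regular point $\Theta^* \in V_{\mathcal{M}_k}$ near $o_k$ (which exist densely on $V_{\mathcal{M}_k}$), take $u_k$ to be the outward normal to the tangent hyperplane to $V_{\mathcal{M}_k}$ at $\Theta^*$, and defer the start of block $k$ until the radius $\rho(A_k)$ is small enough that the second-order deviation $O(\rho(A_k)^2)$ of $V_{\mathcal{M}_k}$ from this tangent hyperplane is dominated by the escape margin $\tfrac{1}{2}\gamma\rho(A_k)$. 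Because Bob's radii shrink geometrically, this deferral costs only finitely many extra moves per block, the strategy is well-defined, and the resulting limit point lies in $I(n,m)$. The $\alpha$-winning statement for $\alpha \leq 1/2$ is then immediate, since $2\alpha \leq 1 < 1 + \alpha\beta$ for every $\beta \in (0,1)$.
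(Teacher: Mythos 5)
Your overall architecture is the same as the paper's: write $I(n,m)$ as the complement of countably many algebraic hypersurfaces $V_{\mathcal{M}}=\{q_{\mathcal{M}}(\Theta)=0\}$ (your determinant computation and the non-vanishing of $q_{\mathcal{M}}$ are fine), then eliminate them one per block using Schmidt's escaping lemma in the version where Alice escapes, which is exactly why the hypothesis is inequality (\ref{ineq}). The gap is in the step where you reduce a \emph{curved} hypersurface to a single hyperplane escape. What the escape really requires is that $V_{\mathcal{M}_k}\cap B$ be contained in a slab of width $o(\gamma\rho(B))$ around one hyperplane for the ball $B$ current at the start of the block, and your mechanism for this --- pick a regular point $\Theta^*$ near the current center, take its tangent hyperplane, and defer until the $O(\rho^2)$ deviation is beaten by the margin $\tfrac12\gamma\rho$ --- is not sound, because the constant in $O(\rho^2)$ is not uniform: it blows up as the gradient of $q_{\mathcal{M}_k}$ degenerates, i.e.\ near the singular locus of $V_{\mathcal{M}_k}$, and Bob controls where the balls concentrate. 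Concretely, if $V$ were the cone $\{x^2+y^2-z^2=0\}$ and Bob keeps his balls centered at the vertex, then at \emph{every} scale $\rho$ the set $V\cap B(0,\rho)$ spreads at a fixed angle and is contained in no slab of width $c\rho$ with $c$ small; a tangent hyperplane at a nearby regular point only controls the branch through that point, and waiting for smaller $\rho$ changes nothing by scale invariance. Your deferral phase gives Alice no strategy to move away from such points, and moving away from the singular locus is itself an escaping problem of the same type.

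This is precisely what the paper's key lemma (the algebraic manifold escaping lemma, Lemma \ref{manesc}) supplies and your proposal lacks: an induction on $\deg f$ in which Alice (in the paper's formulation, the escaping player) first escapes from the zero sets of all partial derivatives $\partial f/\partial z_i$ --- lower-degree polynomials, which in particular contain the singular locus --- and in doing so secures a \emph{quantitative} lower bound $\bigl|\partial f/\partial z_i\bigr|>\varepsilon_{s-1}$ on a fixed ball, together with an upper bound $K$ on all higher derivatives there. Only with these uniform bounds does the Taylor expansion at a point $a\in M$ inside a suitably small ball put all of $M$ in that ball inside a thin slab around the hyperplane $(u,z-a)=0$, $u=b/|b|$, after which one application of Proposition \ref{esc} finishes the block. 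So your plan needs to be repaired by inserting this recursive preliminary escape (or an equivalent uniform curvature/gradient control) before the hyperplane step; as written, the strategy can fail against a Bob who steers toward singular points of $V_{\mathcal{M}_k}$.
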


It is clear that the set $I(n,m)$ has full Lebesgue measure. However, by means of Proposition \ref{badwin} and Proposition \ref{int}, from Theorem \ref{irrwin} we immediately obtain the following easy corollary which seems to be of importance.

\begin{theorem}
The set $N^{irr}(n,m)$ of $n \times (d-n)$-matrices defining badly approximable $n$-dimensional completely irrational subspaces is $\alpha$-winning if $\alpha \leq \frac{1}{2}$.
\end{theorem}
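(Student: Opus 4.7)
The plan is to observe that $N^{irr}(n,m)$ is exactly the intersection of two sets both already known to be $\alpha$-winning for $\alpha\leq 1/2$, and then invoke the countable intersection property (Proposition \ref{int}). So this really is a one-line argument once the two ingredients are in place.

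First I would write $N^{irr}(n,m)=N(n,m)\cap I(n,m)$: a matrix $\Theta$ defines a badly approximable completely irrational subspace if and only if the system of linear forms it induces is badly approximable (i.e.\ $\Theta\in N(n,m)$) and the subspace $\{y=\Theta x\}$ is completely irrational (i.e.\ $\Theta\in I(n,m)$). Both conditions refer to the same matrix $\Theta$ viewed under the coordinates fixed in the definition of badly approximable subspaces, so this intersection identity requires no further work.

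Second I would invoke the two winning results. Proposition \ref{badwin} (Schmidt 1969) asserts that $N(n,m)$ is $\alpha$-winning whenever $\alpha\leq \tfrac{1}{2}$, since the inequality $2\alpha<1+\alpha\beta$ can be ensured for every $\beta\in(0,1)$ in this range. Theorem \ref{irrwin} provides the same conclusion for $I(n,m)$. Applying Proposition \ref{int} to these two sets (it suffices for the intersection of two $\alpha$-winning sets, as a trivial special case of the countable statement) yields that $N(n,m)\cap I(n,m) = N^{irr}(n,m)$ is $\alpha$-winning for every $\alpha\leq\tfrac{1}{2}$, finishing the proof.

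There is no genuine obstacle here; the only subtlety one should check is that the two winning statements are being applied to sets of matrices using the same ambient $\mathbb{R}^{nm}$ and the same coordinate conventions, so that the intersection makes sense inside a single Schmidt game. Once Theorem \ref{irrwin} is established, the corollary really is immediate.
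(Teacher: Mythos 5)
Your proposal is correct and is exactly the paper's argument: the author derives this theorem as an immediate corollary by writing $N^{irr}(n,m)=N(n,m)\cap I(n,m)$ and applying Proposition \ref{badwin}, Theorem \ref{irrwin}, and the intersection property of Proposition \ref{int}. Nothing further is needed.
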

We should also immediately notice that there exist badly approximable subspace that are not completely irrational. The simplest example is as follows.

Let $n$ and $m$ be not coprime: $(n, m) = d > 1$. Suppose $n = n_1d, m = m_1d$ and $\Theta$ is a $n_1 \times m_1$ matrix of badly approximable system of linear forms. Then the matrix
$$\Xi = 
\begin{pmatrix}
  \Theta & 0 & 0 & ...& 0 \\
  0 & \Theta & 0 & ... & 0 \\
  ... & ... & ... & ... & .. \\
  0 & 0 & 0 & ... & \Theta
\end{pmatrix}, 
$$
where 0 denotes $n_1 \times m_1$ matrix with zeros as elements, obviously defines a badly approximable subspace that is obviously not completely irrational.

As $N^{irr}(n,m) \subset N(n,m)$, it has zero Lebesgue measure in $\mathbb{R}^{mn}$. Proposition \ref{cont} shows that $N^{irr}(n,m)$ has full Hausdorff dimension in $\mathbb{R}^{mn}.$

Analogous statements can be proved for HAW property.

\begin{theorem}\label{irrhaw}
For any $n, m \in\mathbb{N}$ the following statements hold.
\begin{enumerate}
    \item The set $I(n,m)$ is HAW, and so
    \item The set $N^{irr}(n,m)$ is HAW.
\end{enumerate}
\end{theorem}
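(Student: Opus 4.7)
Part (2) follows directly from part (1), Proposition \ref{nhaw}, and Proposition \ref{hawintersect}, since $N^{irr}(n,m)=N(n,m)\cap I(n,m)$.

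For part (1), my plan is to realize $I(n,m)$ as a countable intersection of sets whose complements are proper real algebraic subvarieties, and then to show that the complement of any such subvariety is HAW. Using the Pl\"ucker-coordinate discussion in Section~1.1, $\Theta$ defines a completely irrational subspace if and only if the Pl\"ucker coordinates $p_I(\Theta)$ of the graph subspace $y=\Theta x$ satisfy no nontrivial integer linear relation. On the affine chart containing all such graphs we have $p_{\{1,\dots,n\}}\equiv 1$, while the other $p_I$ are, up to sign, the various minors of $\Theta$ of positive size; these minors are linearly independent as polynomials in the entries of $\Theta$, so each $P_c(\Theta):=\sum_I c_I p_I(\Theta)$ with $0\neq c\in\mathbb{Z}^{\binom{n+m}{n}}$ is a nonzero polynomial of degree at most $\min(n,m)$, and
$$I(n,m)=\bigcap_{0\neq c\in\mathbb{Z}^{\binom{n+m}{n}}}\bigl\{\Theta\in\mathbb{R}^{mn}:P_c(\Theta)\neq 0\bigr\}.$$
By Proposition \ref{hawintersect}, it therefore suffices to prove that $\{P\neq 0\}$ is HAW for an arbitrary single nonzero polynomial $P$ on $\mathbb{R}^{mn}$.

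For this last step I would use Alice's \emph{tangent-hyperplane strategy}. Fix $\beta<1/3$. At each round Alice either plays arbitrarily (if Bob's ball $B_i$ is already disjoint from $\{P=0\}$, in which case she has already won) or picks a smooth point $y_i\in\{P=0\}\cap B_i$ (one with $\nabla P(y_i)\neq 0$) and removes the $\varepsilon_i$-neighbourhood of the affine tangent hyperplane to $\{P=0\}$ at $y_i$, with $\varepsilon_i$ just below $\beta\rho_i$. A second-order Taylor expansion of $P$ around $y_i$, combined with uniform $C^2$-bounds on $P$ over the compact region containing the game, shows that once $\rho_i$ has contracted below a threshold depending only on $P$, the whole intersection $\{P=0\}\cap B_i$ sits inside the removed strip, so that $B_{i+1}\cap\{P=0\}=\emptyset$ and Alice wins. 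While $\rho_i$ is still above the threshold, the same cut placed through the centre of $B_i$ forces $\rho_{i+1}\leq(1-\beta)\rho_i/2$, and since $\beta<1/3$ the threshold is attained in finitely many rounds.

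The main obstacle is the possibility that $\{P=0\}\cap B_i$ consists only of singular points, where no tangent hyperplane is defined. I would address this by induction on $\deg P$: the singular locus of $\{P=0\}$ is contained in $\{Q=0\}$ for any nonzero partial derivative $Q$ of $P$, and $\deg Q<\deg P$, so by the inductive hypothesis $\{Q\neq 0\}$ is HAW. Alice may then interleave her two strategies in the diagonal manner used to prove Proposition \ref{hawintersect}, guaranteeing that the limit point lies in $\{Q\neq 0\}$ (hence outside the singular locus) while the tangent-hyperplane strategy operates on the smooth part of $\{P=0\}$; alternatively, one can Whitney-stratify $\{P=0\}$ into finitely many smooth real-analytic submanifolds of positive codimension and apply Proposition \ref{hawintersect} after running the tangent-hyperplane argument stratum by stratum.
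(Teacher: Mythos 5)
Your part (2) and your overall reduction coincide with the paper's: part (2) is exactly the combination of (1) with Propositions \ref{nhaw} and \ref{hawintersect}, and writing $I(n,m)$ via a countable family of nonzero polynomials in the entries of $\Theta$ is the same device the paper uses (there the polynomials are the determinants $\det\Xi$ indexed by rational subspaces $\mathcal{M}$; your indexing by integer vectors $c$ is a harmless variant, although only the inclusion $\bigcap_{c\neq 0}\{P_c\neq 0\}\subseteq I(n,m)$ is actually true and needed — the ``if and only if'' with arbitrary integer relations among Pl\"ucker coordinates is doubtful, but since any superset of an HAW set is HAW this does not hurt you).

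The gap is in the single-polynomial step. Your assertion that ``once $\rho_i$ has contracted below a threshold depending only on $P$, the whole intersection $\{P=0\}\cap B_i$ sits inside the removed strip'' is false as stated: the second-order Taylor estimate bounds the distance from a point $z\in\{P=0\}\cap B_i$ to the tangent hyperplane at $y_i$ by (roughly) $K\rho_i^2/|\nabla P(y_i)|$, where $K$ is a $C^2$-bound, so the threshold depends on a \emph{lower} bound for $|\nabla P(y_i)|$, and this is not controlled by $P$ alone: the gradient can be arbitrarily small at perfectly smooth points of $\{P=0\}$, not merely undefined at singular ones. Hence identifying the singular locus as the only obstacle understates the problem, and your proposed remedy — interleaving with an HAW strategy for $\{Q\neq 0\}$ ($Q$ a partial derivative) ``in the diagonal manner'' of Proposition \ref{hawintersect} — only guarantees that the \emph{limit} point avoids $\{Q=0\}$; it never produces, at a finite stage of the game, a ball on which $|\nabla P|$ is uniformly bounded below, which is exactly what your Taylor step requires. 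This is precisely where the paper's induction differs: its escaping lemma is proved in the stronger, quantitative, finite-stage form — after finitely many moves the \emph{entire} current ball satisfies ${\rm dist}(\cdot,\{Q=0\})>\varepsilon_{s-1}$ and $|Q|>\varepsilon_{s-1}$ for the partial derivatives $Q$ — so that when the hyperplane-removal move is finally played, the gradient at the chosen point of the variety is at least $\varepsilon_{s-1}$ and all constants are uniform. To repair your proof, strengthen the inductive hypothesis to this finite-stage escape (distance to $\{Q=0\}$ \emph{and} $|Q|$ bounded below on a whole ball) and compose the strategies sequentially rather than diagonally; a limit-point contradiction argument could also be made to work, but it is not supplied by your sketch, and the Whitney-stratification alternative faces the same missing uniformity.
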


\subsection{Manifold escaping lemma}
The following lemma is a natural generalization of Schmidt's escaping lemma (Proposition \ref{esc}) to the case of algebraic manifolds of any degree.
\begin{lemma}{(Algebraic manifold escaping lemma.)}\label{manesc}
Consider $(\alpha, \beta)$-game in $\mathbb{R}^r$, and suppose $0 < \alpha < 1, 0 < \beta < 1, \gamma = 1 + \alpha \beta - 2 \beta > 0$. Let $f \in \mathbb{R}[z_1, ..., z_r]$ be a nonzero polynomial and $\{f(z) = 0\}$ be the corresponding algebraic manifold $M$. Then Bob can play so that (no matter how Alice plays) for some $\epsilon > 0$ and for some $l$ any point $y \in W_l$ satisfies the inequality 
\begin{equation}
    {\rm dist}(y, M) > \varepsilon.\label{dist1}
\end{equation}
\end{lemma}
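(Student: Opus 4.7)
The plan is to prove the lemma by induction on the total degree $d$ of the polynomial $f$.

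\textbf{Base cases.} If $d=0$, then $f$ is a nonzero constant, $M=\emptyset$, and any $\varepsilon>0$ works. If $d=1$, then $M$ is an affine hyperplane and we apply Proposition~\ref{esc} directly: take $u$ to be the unit normal to $M$ chosen so that $\langle u,o_k-p\rangle\ge 0$ for the nearest point $p\in M$ to $o_k$ (either normal works if $o_k\in M$). After $t$ rounds with $(\alpha\beta)^t<\gamma/2$, every point of the resulting ball lies at distance greater than $\gamma\rho(A_k)/2$ from $M$, as one verifies by expanding ${\rm dist}(y,M)=|\langle y-p,u\rangle|=|\langle y-o_k,u\rangle+|o_k-p||$.

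\textbf{Inductive step.} For $d\ge 2$, pick an index $i$ with $g:=\partial f/\partial z_i\not\equiv 0$, so $\deg g=d-1$. Applying the inductive hypothesis to $\{g=0\}$ lets Bob reach a stage $l_1$ at which every point of $W_{l_1}$ is at distance greater than some $\varepsilon_1>0$ from $\{g=0\}$; by compactness and continuity this yields a uniform lower bound $|g|\ge c_1>0$ on $W_{l_1}$ with constant sign. The implicit function theorem then represents $M$ on $W_{l_1}$ as a smooth graph $z_i=h(z')$ over the remaining coordinates $z'$, and differentiating $f(z',h(z'))=0$ once and twice yields uniform bounds $|\nabla h|\le L$ and $|D^2h|\le M_h$, all determined by $c_1$ and polynomial bounds on the derivatives of $f$ on $B_1$. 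Bob then plays a further stretch of rounds, centering his balls near $o_{l_1}$ so as to keep them well inside $W_{l_1}$, until at some stage $l_2$ the radius $\rho(A_{l_2})$ falls below a threshold $\delta$ depending on $M_h$ and $\gamma$. If $W_{l_2}\cap M=\emptyset$ we are done; otherwise fix $p_0\in W_{l_2}\cap M$, let $H$ be the tangent hyperplane to $M$ at $p_0$, and let $u$ be the unit normal to $H$ with $\langle u,o_{l_2}-p_0\rangle\ge 0$. A second application of Proposition~\ref{esc} starting at stage $l_2$ in direction $u$ forces every $y$ in the resulting ball to satisfy ${\rm dist}(y,H)>\gamma\rho(A_{l_2})/2$. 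Taylor's theorem combined with $|D^2h|\le M_h$ shows every point of $M\cap W_{l_1}$ lies within $CM_h\rho(A_{l_2})^2<\gamma\rho(A_{l_2})/4$ of $H$ (by choice of $\delta$); points of $M$ outside $W_{l_1}$ are separated from $y$ by the positive margin that Bob maintained. Hence every $y$ in the final ball satisfies ${\rm dist}(y,M)>\gamma\rho(A_{l_2})/4$, and the induction is complete.

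\textbf{Main obstacle.} The principal technical difficulty is the quantitative comparison between $M$ and its tangent hyperplane in the inductive step. One must verify that the constants $c_1,L,M_h$ extracted from the inductive hypothesis are genuinely uniform on the compact region and are independent of Alice's subsequent moves; one must also design a ball-placement strategy for Bob that simultaneously shrinks his ball below the threshold $\delta$ and keeps it at a definite positive distance from $\partial W_{l_1}$, so that the quadratic deviation of $M$ from $H$ (of order $M_h\rho^2$) is genuinely dominated by the linear push $\gamma\rho/2$ produced by a single application of Schmidt's escape lemma. Coordinating the two escapes within one consistent strategy for Bob is the non-trivial core of the argument.
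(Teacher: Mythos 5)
Your proposal follows essentially the same route as the paper: induction on the degree of $f$, with Schmidt's escaping lemma (Proposition \ref{esc}) as the base case and, in the inductive step, an escape from the zero set of a partial derivative to secure a lower bound on the gradient, after which $M$ is approximated near a point $p_0$ of $M$ in the current small ball by a hyperplane with quadratic error and escaped by one more application of Proposition \ref{esc}; the paper does exactly this, only it Taylor-expands $f$ directly at $p_0$ (strengthening the inductive statement to include $|f|>\varepsilon$ and treating all partial derivatives at once) instead of invoking the implicit function theorem and compactness, which is a cosmetic difference. The one slip is your claim that \emph{every} point of $M\cap W_{l_1}$ lies within $CM_h\rho(A_{l_2})^2$ of $H$ --- this only holds for points of $M$ within distance $O(\rho(A_{l_2}))$ of $p_0$ --- but points of $M$ farther from $p_0$ are automatically at distance at least comparable to $\rho(A_{l_2})$ from the final ball, so the conclusion is unaffected.
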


\begin{proof}
We'll prove this statement by induction on degree $s$ of polynomial $f(z).$ It is convinient instead of inequality (\ref{dist1}) to consider two inequalities 
\begin{equation}
 {\rm dist}(y, M) > \varepsilon \,\,\,\,\,\,\,\,\,\text{and}  \,\,\,\,\,\,\,\,\, |f(y)| > \varepsilon \label{dist2}
\end{equation}
simultaneously.

Base is given by Schmidt's escaping lemma (Proposition \ref{esc});

Step. Suppose our lemma is correct for all polynomials $f$ such that ${\rm deg} f \leq s-1.$ Consider the polynomials $\frac{\partial f}{\partial z_i}$ and corresponding manifolds $M_i$ and let $W_m$ be such a ball that 
$$
\forall z \in W_m, \,\, \forall i \,\, { \rm dist}(z, M_i)>\varepsilon_{s-1} \,\,\,\,\,\,\,\,\, \text{and} \,\,\,\,\,\,\,\,\, \Big| \frac{\partial f}{\partial z_i} \Big| > \varepsilon_{s-1},
$$
where $\varepsilon_{s-1}$ comes from the inductive assumption of the form (\ref{dist2}). As $f(z)$ is a polynomial, $\frac{\partial^k f}{\partial z_{i_1} ... \partial z_{i_k}}(z)$ are also polynomials and there are only finitely many nonzero of them. We can bound them in $W_m$ by one common constant $K$, so
$$\Big| \frac{\partial^k f}{\partial z_{i_1} ... \partial z_{i_k}}(z) \Big| < K \,\,\,\, \forall z \in W_m, \,\,\,\, \forall k, \,\,\,\, \forall i_1, ..., i_k.
$$

Take $\delta$ such that $K \sum\limits_{k=2}^{s+1} \frac{1}{k!}d^k \delta^k < \frac{1}{8} \alpha \beta \gamma \delta \varepsilon_{s-1}$ and choose $t$ with condition 

$$
\frac{1}{2} \alpha \beta \delta < \rho(W_t) \leq \frac{1}{2} \delta.
$$
If $W_t$ doesn't intersect $M$, it easily completes the proof. Otherwise choose a point $a$ belonging to both the ball $W_t$ and manifold $M$ and denote $\frac{\partial f}{\partial z_i}(a)$ by $b_i$. Then 

$$
f(z) = \sum\limits_{i=1}^{r} b_i (z_i - a_i) + \sum \frac{1}{k} \frac{\partial^k f}{\partial z_{i_1} ... \partial z_{i_k}} (a) (z_{i_1} - a_{i_1}) ... (z_{i_k} - a_{i_k}),
$$
and for all points $z = (z_1, ..., z_r)$ of the manifold $M$ in $W_t$ one has
$$
\Big|\sum\limits_{i=1}^{r} b_i (z_i - a_i)\Big| = \Big| \sum \frac{1}{k} \frac{\partial^k f}{\partial z_{i_1} ... \partial z_{i_k}} (a) (z_{i_1} - a_{i_1}) ... (z_{i_k} - a_{i_k}) \Big| < \frac{1}{8} \alpha \beta \gamma \delta \varepsilon_{s-1}.
$$

Denote the vector $\frac{b}{|b|}$ by $u$. We get
\begin{equation}
|(u, z-a)| < \frac{\alpha \beta \gamma \delta \varepsilon_{s-1}}{8|b|} < \frac{\alpha \beta \gamma \delta}{8}. \label{it}
\end{equation}
If $o$ is the center of $W_t$, (\ref{it}) can be written as
$$
-\frac{\alpha \beta \gamma \delta}{8} < (u, z-o) + (u, o-a) < \frac{\alpha \beta \gamma \delta}{8},
$$
and it follows (depending on the sign of $(u, o-a)$) that either $(u, x-o) < \frac{\alpha \beta \gamma \delta}{8}$, or $(-u, z-o) < \frac{\alpha \beta \gamma \delta}{8}$. 

Changing the sign of the vector $u$ if necessary, for $u$ and for any point $z \in M \bigcap W_t$ we have
\begin{equation}
    (u, z-o) < \frac{\alpha \beta \gamma \delta}{8}. \label{eq1st}
\end{equation}
By Schmidt's lemma Bob can play in such a way that for some $l$ we have 
\begin{equation} (u, y-o) > \frac{1}{2} \gamma \rho(W_t) > \frac{\alpha \beta \gamma \delta}{4} \,\,\,\,\,\,\,\,\,\,\,\, \forall y \in W_l. \label{eq2nd}
\end{equation}

From (\ref{eq1st}) and (\ref{eq2nd}) immediately follows the inequality $|y - z| > \frac{\alpha \beta \gamma \delta}{8} = \varepsilon_1.$
As $W_l$ is compact, $|f(z)|$ is bounded from below by some positive value $\varepsilon_2$ in $W_l$. Choosing the minimum of $\varepsilon_1$ and $\varepsilon_2$ as $\varepsilon$, we complete the proof.
\end{proof}

By means of Lemma \ref{manesc} we can easily prove Theorem \ref{irrwin}.

\begin{proof}{of Theorem \ref{irrwin}.}
 The matrix $\Theta$ which defines the subspace $\mathcal{L}$ has the form 

$$\Theta = 
\begin{pmatrix}
  \theta_1^1& \theta_1^2& ... & \theta_1^n \\
  \theta_2^1& \theta_2^2& ... & \theta_2^n \\
  ... & ... & ... & ... \\
  \theta_m^1& \theta_m^2& ... & \theta_m^n \\
\end{pmatrix}.
$$

As in Subsection 1.1, consider a new matrix 

$$\Xi = 
\begin{pmatrix}
  \theta_1^1& \theta_2^1& ... & \theta_m^1 & 1 & 0 & ... & 0 \\
  \theta_1^2& \theta_2^2& ... & \theta_m^2 & 0 & 1 & ... & 0\\
  ... & ... & ... & ... & ... & ... & ... & ... \\
  \theta_1^n& \theta_2^n& ... & \theta_m^n & 0 & 0 & ... & 1\\
  a_1^1 & a_2^1 & ... & a_m^1 & a_{m+1}^1 & a_{m+2}^1 & ... & a_d^1 \\
  ... & ... & ... & ... & ... & ... & ... & ...\\
  a_1^m& a_2^m& ... & a_m^m & a_{m+1}^m & a_{m+2}^m & ... & a_d^m
\end{pmatrix}. 
$$
The first $n$ rows of this matrix consist of the vectors which form a basis of our subspace $\mathcal{L}$. The last $m$ rows consist of the vectors which form a basis of a certain $m$-dimensional rational subspace  $\mathcal{M}$.
 Subspaces $\mathcal{L}$ and $\mathcal{M}$ intersect nontrivially if and only if ${ \rm det} \, \Xi = 0$. This means that $\Theta$ considered as $mn$-dimensional vector of $\mathbb{R}^{nm}$ belongs to a certain algebraic manifold $S_{\mathcal{M}}$. 
 
 There are countably many rational subspaces in $\mathbb{R}^r$, and each defines an algebraic manifold we need to escape from in our $(\alpha, \beta)$-game. Successively applying Lemma \ref{manesc} with $r = n \cdot m$ to all manifolds $S_{\mathcal{M}}$ for various $\mathcal{M}$, we get that the set
 $$
 \mathbb{R}^r \setminus \Big( \bigcap\limits_{\mathcal{M} \text{ is a rational subspace}} S_{\mathcal{M}} \Big) = I(n, m)
 $$
 is $(\alpha, \beta)$-winning for $\alpha$ and $\beta$ satisfying the inequality (\ref{ineq}).
\end{proof}

\subsection{On HAW-game}

It appears to be that a statement analogous to the Manifold escaping lemma (Lemma \ref{manesc}) is also true in case of HAW.

\begin{lemma}{(HAW algebraic manifold escaping lemma.)} Consider HAW-game. Let $f \in \mathbb{R}[z_1, ..., z_r]$ be a polynomial; $\{f(z) = 0\}$ --- algebraic manifold $M$. Then Alice can play so that (no matter how Bob plays) for some $\varepsilon > 0$ and for some $k$ any point $y \in W_k$ satisfies the following inequalities: ${\rm dist}(y, M) > \varepsilon$ and $|f(y)| > \varepsilon$ .
\end{lemma}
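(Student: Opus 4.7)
The plan is to imitate the induction on $s=\deg f$ from Lemma \ref{manesc}, replacing each use of Schmidt's escaping lemma by a single HAW move in which Alice removes an $\varepsilon$-neighbourhood of an appropriately chosen hyperplane. As in Lemma \ref{manesc}, I would maintain the two inequalities ${\rm dist}(y,M)>\varepsilon$ and $|f(y)|>\varepsilon$ simultaneously throughout the induction; here $W_k$ is understood as Bob's $k$-th ball $B_k$.

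\textbf{Base case.} If $\deg f=1$ then $M$ is an affine hyperplane. After Bob chooses $B_1$, Alice removes the $\varepsilon$-neighbourhood of $M$ itself with $\varepsilon<\beta\rho_1$. Any subsequent Bob ball lies outside this slab, and $|f|$ is automatically bounded below there because $f$ is affine with nonzero gradient.

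\textbf{Inductive step.} Assume the lemma for polynomials of degree at most $s-1$. I would first apply the inductive hypothesis to each of the $r$ partial derivatives $\partial f/\partial z_i$, with zero sets $M_i$. Using the countable-intersection property of HAW (Proposition \ref{hawintersect}), or equivalently by concatenating the finitely many resulting strategies, Alice forces Bob's ball at some finite stage $m$ to satisfy ${\rm dist}(z,M_i)>\varepsilon_{s-1}$ and $|\partial f/\partial z_i(z)|>\varepsilon_{s-1}$ simultaneously for all $i=1,\ldots,r$. If $W_m\cap M=\emptyset$ we are already done; otherwise pick $a\in W_m\cap M$ and set $u=\nabla f(a)/|\nabla f(a)|$, a unit vector well defined thanks to the lower bound on the partial derivatives. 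Alice now burns a bounded number of moves (for example, removing a slab through the centre of each current ball, which with $\beta<\frac{1}{3}$ reduces the radius by a definite factor), forcing Bob into a ball $W$ of radius smaller than any prescribed $\delta$. For $\delta$ small enough, the Taylor expansion of $f$ at $a$, together with the uniform bound on higher derivatives of $f$ over $W_m$, yields $|(u,z-a)|<\eta$ with $\eta\ll\delta$ for every $z\in M\cap W$, exactly as in the proof of Lemma \ref{manesc}. Alice's closing move is to remove the $\varepsilon$-neighbourhood of the affine hyperplane $L=\{z:(u,z-a)=0\}$ with $\eta<\varepsilon<\beta\rho(W)$. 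Bob's next ball $W_k$ then lies in the complementary slab, which forces ${\rm dist}(y,M)>\varepsilon-\eta>0$ for every $y\in W_k$; the companion bound $|f(y)|>\varepsilon'$ follows from compactness since $f$ vanishes only on $M$.

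\textbf{Main obstacle.} The delicate point is that HAW allows Alice to remove only a single hyperplane $\varepsilon$-neighbourhood per turn, whereas Schmidt's escaping lemma used inside Lemma \ref{manesc} aggregates many moves into a direction-escape guarantee. Bridging this gap relies on two auxiliary observations: that Alice can force Bob's radius to drop below any prescribed $\delta$ in finitely many turns, and that once the ball is small enough relative to the derivatives of $f$, the set $M$ lies in an $\eta$-neighbourhood of its tangent hyperplane at $a$. Both follow from the uniform lower bound $|\nabla f|>\varepsilon_{s-1}$ supplied by the inductive hypothesis, so the Taylor estimate of Lemma \ref{manesc} transfers essentially verbatim; what is genuinely new is only the bookkeeping of Alice's turns in the HAW framework.
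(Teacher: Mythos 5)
Your proposal is correct and follows essentially the same route as the paper: induction on $\deg f$, base case by removing a neighbourhood of the hyperplane $M$ itself, and in the inductive step using the lower bound on $|\nabla f|$ from the inductive hypothesis together with the Taylor expansion at a point $a\in M\cap W_m$ to trap $M$ in a thin slab around the tangent hyperplane $(u,z-a)=0$, which Alice then removes in a single HAW move after first shrinking Bob's ball to a suitable radius (the paper's Lemma \ref{easy}). The one place where you are slightly more explicit than the paper — concatenating the finitely many strategies for the partial derivatives — is a correct reading of what the paper leaves implicit.
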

We need the following obvious statement.
\begin{lemma}\label{easy}
Let $R$ be a parameter, such that $\rho_1 \geq R.$ Alice can always play in such a way that for some $t$ the inequality $\beta R < \rho_t \leq R$ holds.
\end{lemma}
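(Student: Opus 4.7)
The plan is to give Alice a strategy that shrinks Bob's ball by a definite factor at every step until its radius falls into the target window; the lower endpoint $\beta R$ will then drop out automatically from the HAW rule $\rho_{i+1} \geq \beta \rho_i$, with no fine tuning required.

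First I would dispose of the trivial case $\rho_1 = R$ by setting $t = 1$. Otherwise $\rho_1 > R$, and as long as the current radius satisfies $\rho_i > R$, Alice plays the hyperplane $L_i$ passing through the center of $B_i$ with any admissible thickness, for instance $\varepsilon_i = \beta \rho_i / 2 \in (0, \beta \rho_i)$. A direct inscribed-ball calculation shows that the largest ball contained in either connected component of $B_i \setminus A_i$ has radius at most $(\rho_i - \varepsilon_i)/2$. Indeed, after translating $B_i$ so its center is the origin and rotating $L_i$ to $\{x_1 = 0\}$, a ball of radius $r$ with center $c$ contained in $\{x \in B_i : x_1 \geq \varepsilon_i\}$ must satisfy $c_1 \geq r + \varepsilon_i$ and $|c| + r \leq \rho_i$, which forces $2r \leq \rho_i - \varepsilon_i$ by addition. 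Since Bob must pick $B_{i+1} \subset B_i \setminus A_i$, we obtain
\[
\rho_{i+1} \;\leq\; \frac{\rho_i - \varepsilon_i}{2} \;=\; \frac{\rho_i}{2}\!\left(1 - \frac{\beta}{2}\right) \;<\; \frac{\rho_i}{2}.
\]
This upper bound is consistent with the mandatory lower bound $\rho_{i+1} \geq \beta \rho_i$, since $\beta < 1/3$ implies $(1-\beta)/2 > \beta$, so Bob always has an admissible move.

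Iterating Alice's move as long as $\rho_i > R$ gives $\rho_{k+1} < \rho_1/2^k$, hence there exists a smallest index $t$ with $\rho_t \leq R$. If $t = 1$, then $\rho_t = R$ and we are done. Otherwise $t \geq 2$ and $\rho_{t-1} > R$ by minimality of $t$, so the HAW rule yields $\rho_t \geq \beta \rho_{t-1} > \beta R$, while $\rho_t \leq R$ by the choice of $t$. In either case $\beta R < \rho_t \leq R$, as required.

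There is no real obstacle beyond the elementary inscribed-ball geometry; the content of the argument is simply that Alice's ability to halve the radius at each step, combined with the one-step lower bound $\rho_{i+1} \geq \beta \rho_i$, prevents Bob from ever skipping over the window $(\beta R, R]$ in a single move.
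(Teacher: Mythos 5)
Your proof is correct: the inscribed-ball estimate $c_1 \geq r + \varepsilon_i$, $|c| + r \leq \rho_i$, hence $2r \leq \rho_i - \varepsilon_i$, does force $\rho_{i+1} < \rho_i/2$ under your strategy, and the game rule $\rho_{i+1} \geq \beta \rho_i$ applied at the first index $t$ with $\rho_t \leq R$ prevents Bob from jumping below $\beta R$, exactly as needed. The paper gives no proof of this lemma at all --- it is declared ``obvious'' --- and your argument is precisely the natural one intended, with the useful added detail that $\beta < \frac{1}{3}$ guarantees Bob always has an admissible move (a ball of radius $\rho_i(1-\beta)/2 > \beta\rho_i$ fits in a component of $B_i \setminus A_i$), so the game never stalls.
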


Proof of HAW algebraic manifold escaping lemma uses the same idea as for $(\alpha, \beta)$-game. We'll show only the parts of the proof that are different.
\begin{proof}
We'll prove this statement by induction on degree $s$ of polynomial $f(z).$

Base $(s=1)$: Alice can choose the affine subspace $f(z) = 0$ as $A_1$.

All further steps and notations until the choice of $\delta$ are the same as in Lemma \ref{manesc}. 

Here we take $\delta$ such that $K \sum\limits_{k=2}^{s+1} \frac{1}{k!}r^k \delta^k < \frac{1}{4} \beta^2 \delta \varepsilon_{s-1}$ and choose (according to Lemma \ref{easy}) $t$ under condition
$$
\frac{1}{2} \beta \delta < \rho_t \leq \frac{1}{2} \delta.
$$

Following the notation of previous proof, we come to the inequality

$$
|(u, z-a)| < \frac{\beta^2 \delta}{4}
$$
for all points of the manifold $M$ contained in the ball $B_t$.

This inequality means that all such points are contained in $\frac{\beta^2 \delta}{4}$-neighbourhood of the affine subspace $(u, z-a) = 0$. We choose it as an affine subspace $L$, and $A_i$ is its $\varepsilon_i \cdot \rho_t = \beta \cdot \rho_t > \frac{1}{2} \beta^2 \delta$ neighbourhood. The proof is completed. 
\end{proof}

The proof of statement Theorem \ref{irrhaw}, statement (1) repeats the proof of Theorem \ref{irrwin}. Statement (2) of Theorem \ref{irrhaw} follows from statement (1), Propositions \ref{nhaw} and \ref{hawintersect}.

\section{Applications}

\subsection{Some upper bounds for the exponents}

In this subsection we consider Diophantine exponents of vectors. 

We deal with nonzero vectros $\xi$ in $\mathbb{R}^d$, $d \geq 2$, so without loss of generality we may suppose that $\xi$ is of the form 
\begin{equation}
\xi = (\xi_1, \dots, \xi_{d-1}, 1).   \label{form}
\end{equation}
The vector $\xi$ of form (\ref{form}) is called \textit{totally irrational} if its coordinates are linearly independent over $\mathbb{Q}$. In other words, it means that $\xi$ does not belong to any proper rational subspace of $\mathbb{R}^d$.

The ordinary Diophantine exponent $\omega(\xi)$  of $\xi$ is the supremum of the set of all $\gamma > 0$ for which the inequality
$$
\max\limits_{j = 1, \dots, d-1} \parallel q \xi_j  \parallel \leq q^{-\gamma}
$$
has infinitely many integer solutions $q > 0$. 

The uniform Diophantine exponent $\hat{\omega}(\xi)$  of $\xi$ is the supremum of the set of all $\gamma > 0$ for which the system of inequalities 
$$
\max\limits_{j = 1, \dots, d-1} \parallel q \xi_j  \parallel \leq t^{-\gamma}, \,\,\,\,\,\, 0 < q \leq t,
$$
has an integer solution $q$ for all $t$ large enough. 

It is well-known that for irrational $\xi$ we have $\frac{1}{d-1} \leq \hat{\omega}(\xi) \leq 1$ and that $\omega(\xi) \geq \hat{\omega}(\xi)$.

Suppose now that $d \geq 3$ and $1 \leq n < d$. We consider a linear $n$-dimensional subspace $\mathcal{L}$ in $\mathbb{R}^d$. Let 
$$
w_{n,d} = \frac{n}{d-n}
$$
and let $W_{n,d}$ be the unique root of the equation
$$
x^d - w_{n,d}^{d-2} (1 + w_{n,d})x + w_{n,d}^{d-1} = 0
$$
in the interval $(0, w_{n,d})$.


The following Proposition \ref{klw} was proved in \cite{KlWeiss}. Here we should mention that the notation here differs from \cite{KlWeiss}. In particular, our constants $w_{d,n}$ and $W_{n,d}$ can be obtained from those in \cite{KlWeiss} by substitution

$$
s \mapsto n-1, \,\,\,\,\,\,\,\,\,\,\, n \mapsto d-1.
$$

\begin{proposition}\label{klw} Let $\mathcal{L}$ be an $n-dimensional$ badly approximable linear subspace of $\mathbb{R}^d$. Then
\begin{enumerate}
    \item for any $\xi \in \mathcal{L}$ of the form (\ref{form}) one has
    \begin{equation}\label{klw1}
        \hat{\omega}(\xi) \leq w_{n,d};
    \end{equation}
    \item for any totally irrational $\xi \in \mathcal{L}$ of the form (\ref{form}) one has
    \begin{equation}\label{klw2}
        \hat{\omega}(\xi) \leq W_{n,d}.
    \end{equation}
\end{enumerate}
\end{proposition}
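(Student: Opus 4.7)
The plan is to confront the definition of $\hat\omega(\xi)$ with the defining inequality $(\max_{i\leq n}|x_i|)^n\,\mathrm{dist}(\mathcal{L},x)^m \geq c$ for a badly-approximable subspace, using standard lattice arguments to produce integer vectors close to $\mathcal{L}$ out of Dirichlet-type approximations to $\xi$.

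For part~(1) I would argue by contradiction. Assume $\hat\omega(\xi) > w_{n,d}$ and fix $\gamma \in (w_{n,d},\hat\omega(\xi))$. By the definition of the uniform exponent, for every sufficiently large $t$ there is an integer $0<q\leq t$ with $\max_j \|q\xi_j\| \leq t^{-\gamma}$. Rounding $q\xi$ coordinate-wise gives a nonzero $v\in\mathbb{Z}^d$ with $|v|_\infty \ll t$ and $\mathrm{dist}(v,\mathbb{R}\xi) \ll t^{-\gamma}$. Since $\mathbb{R}\xi\subset\mathcal{L}$, this yields $\mathrm{dist}(v,\mathcal{L})\ll t^{-\gamma}$, so that
\[
\Bigl(\max_{i\leq n}|v_i|\Bigr)^n\,\mathrm{dist}(v,\mathcal{L})^m \ll t^{\,n-\gamma m}\longrightarrow 0,
\]
because $\gamma m > n = w_{n,d}\cdot m$. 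This contradicts badly-approximability, and therefore $\hat\omega(\xi)\leq w_{n,d}$.

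For part~(2) the argument above uses only a single Dirichlet-type approximation at each scale and is wasteful when $\xi$ is totally irrational. My plan is to exploit several $\mathbb{Q}$-linearly independent best approximations at once. Applying Minkowski's convex-body theorem to flat parallelepipeds aligned with $\mathbb{R}\xi$ (short transversally, long along $\xi$) should furnish integer vectors $v^{(1)},\ldots,v^{(d)}$ at consecutive scales, each close to $\mathcal{L}$, with individually controlled heights and distances. Total irrationality guarantees that these vectors span $\mathbb{R}^d$, for otherwise a nontrivial rational linear relation on $\xi_1,\ldots,\xi_{d-1},1$ would arise in the limit. Their determinant is then a nonzero integer, hence at least $1$, and matching this volume lower bound against the $d$ badly-approximable upper bounds applied to each $v^{(j)}$ produces a system of inequalities in the $d$ scale parameters and in $\hat\omega(\xi)$.

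The main obstacle will be the resulting optimization. Balancing the $d$ scale parameters, subject to one volume constraint and the badly-approximable constraints, should reduce to a linear recursion whose characteristic polynomial is precisely $x^d-w_{n,d}^{d-2}(1+w_{n,d})x+w_{n,d}^{d-1}$; its root $W_{n,d}\in(0,w_{n,d})$ then yields the desired bound. Setting up this recursion cleanly, and verifying that the extremal configuration is actually realised so that the resulting bound matches the cited polynomial and is not merely weaker, is the technical heart of the argument, and it is the step where the hypothesis of total irrationality enters in an essential way.
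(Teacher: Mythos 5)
Your argument for part~(1) is sound and is essentially the standard one: rounding a Dirichlet-type approximation $q\xi$ to a nonzero integer vector $v$ with $|v|_\infty\ll t$ and $\mathrm{dist}(v,\mathcal{L})\ll t^{-\gamma}$ makes the product $\bigl(\max_{i\le n}|v_i|\bigr)^n\,\mathrm{dist}(v,\mathcal{L})^m\ll t^{\,n-\gamma m}$ tend to zero once $\gamma>n/m=w_{n,d}$, contradicting badly approximability. In fact the same computation run with the ordinary exponent proves the stronger inequality $\omega(\xi)\le w_{n,d}$, i.e.\ (\ref{ord}), which is exactly how (\ref{klw1}) is obtained in \cite{KlWeiss}. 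Two details to tidy up: the height in the badly-approximable condition is taken only over the $n$ graph coordinates, and you should note that the projection of $v$ onto those coordinates is nonzero for large $t$ (otherwise $\mathrm{dist}(v,\mathcal{L})$ would be bounded below, contradicting $\mathrm{dist}(v,\mathcal{L})\ll t^{-\gamma}$).

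For part~(2), what you have written is a plan, not a proof, and the plan defers precisely the hard step. The paper itself does not prove this proposition (it is quoted from \cite{KlWeiss}), but it records the actual logical route: (\ref{klw2}) follows by combining $\omega(\xi)\le w_{n,d}$ with the Marnat--Moshchevitin transference inequality $\omega(\xi)/\hat{\omega}(\xi)\ge G_d(\hat{\omega}(\xi))$ valid for totally irrational $\xi$ (Proposition~\ref{totirrat}); substituting the former into the latter yields the polynomial inequality whose root in $(0,w_{n,d})$ is $W_{n,d}$. Your proposed multi-scale best-approximation and determinant argument is in effect an attempt to reprove that transference theorem, which is a substantial result in its own right, and you give no derivation: you never write down the system of inequalities relating the consecutive scales, never show where total irrationality forces $d$ independent approximation vectors rather than fewer, and never carry out the optimization -- asserting that the characteristic polynomial ``is precisely'' $x^d-w_{n,d}^{d-2}(1+w_{n,d})x+w_{n,d}^{d-1}$ is assuming the conclusion. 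Either invoke Proposition~\ref{totirrat} explicitly and perform the (short) substitution, or be prepared to reproduce the argument of \cite{MosMar} in full; as it stands, part~(2) is not established.
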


As it was shown in \cite{KlWeiss}, inequality (\ref{klw1}) follows from
\begin{equation}\label{ord}
    \omega(\xi) \leq w_{n,d}.
\end{equation}
Inequality (\ref{klw2}) is a corollary of the following result from \cite{MosMar}.

\begin{proposition}\label{totirrat}
Let $G_d(\hat{\omega}(\xi))$ be the unique positive root of the equation
$$
x^{d-2} = \frac{\hat{\omega}(\xi)}{1 - \hat{\omega}(\xi)} (x^{d-3} + \dots + x + 1).
$$
Suppose $\xi$ is totally irrational. Then the inequality 
$$
\frac{\omega(\xi)}{\hat{\omega}(\xi)} \geq G_d(\hat{\omega}(\xi))
$$
holds.
\end{proposition}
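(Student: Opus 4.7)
The plan is to use the theory of best approximations: total irrationality guarantees that the best approximation vectors eventually span $\mathbb{R}^d$, and a determinant lower bound then translates into the desired inequality after a variational analysis. Let $v_k = (q_k, p_k) \in \mathbb{Z}^d$ be the best approximations to $\xi$ (in sup norm), with $M_k := q_k$ strictly increasing and $\zeta_k := \max_j \|q_k \xi_j\|$ strictly decreasing. Setting $\mu_k := \log M_k$ and $\nu_k := -\log \zeta_k$, one has $\omega(\xi) = \limsup_k \nu_k/\mu_k$ and $\hat{\omega}(\xi) = \liminf_k \nu_k/\mu_{k+1}$. The unimodular substitution $(q, p) \mapsto (q, q\xi - p)$ turns $v_k$ into a vector with first coordinate of size $M_k$ and the other coordinates bounded in absolute value by $\zeta_k$.

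Since $\xi$ is totally irrational, the $v_k$ cannot eventually lie in a proper rational subspace; a pigeonhole argument then yields, for each sufficiently large $k$, indices $k \leq j_1 < \cdots < j_d$ with $j_d - j_1$ bounded in terms of $d$ alone, such that $v_{j_1}, \ldots, v_{j_d}$ are $\mathbb{Q}$-linearly independent. Their determinant is a nonzero integer; expanding it after the substitution above and retaining the dominant term (largest $M_{j_d}$ paired with omission of the smallest $\zeta_{j_d}$) gives $1 \leq |\det(v_{j_1}, \ldots, v_{j_d})| \ll M_{j_d} \zeta_{j_1} \cdots \zeta_{j_{d-1}}$, which upon taking logarithms becomes
$$
\mu_{j_d} \geq \nu_{j_1} + \nu_{j_2} + \cdots + \nu_{j_{d-1}} + O(1).
$$

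Pass to a subsequence in $k$ along which $\nu_{j_1}/\mu_{j_1} \to \omega(\xi)$ and all ratios $\mu_{j_{l+1}}/\mu_{j_l}$ converge to limits. Combining the displayed inequality with the uniform exponent bounds $\nu_{j_l} \geq (\hat{\omega} - \varepsilon) \mu_{j_{l+1}}$ and performing a variational analysis, the minimal value of $r := \omega(\xi)/\hat{\omega}(\xi)$ compatible with these constraints is attained when $\mu_{j_l}$ grows geometrically with common ratio $r$; under this extremal configuration the inequality collapses to
$$
r^{d-2} \geq \hat{\omega}(\xi) \bigl( 1 + r + r^2 + \cdots + r^{d-2} \bigr),
$$
equivalently $(1-\hat{\omega}) r^{d-2} \geq \hat{\omega}(1 + r + \cdots + r^{d-3})$. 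Since the corresponding polynomial has $G_d(\hat{\omega})$ as its unique positive root and is negative at zero with positive leading coefficient, the inequality forces $r \geq G_d(\hat{\omega}(\xi))$, as desired.

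The main obstacle is the variational identification of the geometric progression as the extremal growth pattern: this requires a compactness or linear-programming argument on the polytope of admissible growth ratios, and it is precisely this step that delivers the sharp constant $G_d(\hat{\omega})$ rather than the weaker bound $w_{n,d}$ of Proposition \ref{klw}(1). A secondary subtlety is the pigeonhole step producing $d$ independent $v_{j_l}$ with bounded index spread; without total irrationality this fails, since the best approximations can persistently collapse into a proper rational subspace.
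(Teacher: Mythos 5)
First, note that the paper does not prove Proposition \ref{totirrat} at all: it is quoted verbatim from \cite{MosMar} (Marnat--Moshchevitin), so there is no in-paper proof to compare against. Your sketch does follow the general skeleton of the known argument (best approximation vectors, a determinant lower bound over $d$ independent ones, and an extremal analysis of the resulting growth constraints), but as written it has two genuine gaps. The first is the pigeonhole step: the claim that for every large $k$ one can find linearly independent $v_{j_1},\dots,v_{j_d}$ with $j_d-j_1$ bounded in terms of $d$ alone is false. Total irrationality only guarantees that the best approximations do not \emph{eventually} lie in a proper rational subspace; they can remain in one for arbitrarily long stretches, so the index spread is unbounded. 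This matters because your key constraint $\nu_{j_l}\geq(\hat{\omega}-\varepsilon)\mu_{j_{l+1}}$ is only valid for \emph{consecutive} best approximations: the uniform exponent gives $\zeta_k\leq M_{k+1}^{-\gamma}$, i.e.\ $\nu_k\geq\gamma\mu_{k+1}$ with $k+1$ the next best approximation index, and since $\mu_{j_{l+1}}\geq\mu_{j_l+1}$ this does \emph{not} imply $\nu_{j_l}\geq\gamma\mu_{j_{l+1}}$ when $j_{l+1}>j_l+1$. The actual proof in \cite{MosMar} selects the independent vectors inductively (each $j_{l+1}$ being the first index leaving the span of the previous ones) and exploits the fact that all intermediate best approximations stay inside that span; this is precisely the structure your sketch elides.

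The second gap is the one you flag yourself: the variational or linear-programming step identifying the geometric progression as the extremal configuration is asserted, not carried out, and it is the heart of the theorem --- it is exactly what upgrades the trivial transference to the sharp constant $G_d(\hat{\omega})$. Your determinant bound $\mu_{j_d}\geq\nu_{j_1}+\dots+\nu_{j_{d-1}}+O(1)$ and the endgame (the polynomial $(1-\hat{\omega})x^{d-2}-\hat{\omega}(x^{d-3}+\dots+1)$ being negative at $0$ with a unique positive root, so that nonnegativity forces $r\geq G_d(\hat{\omega})$) are both correct, but without a corrected selection of indices and an actual optimization argument the proof is not complete.
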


Define ${\rm dim}_{\mathbb{Q}}(\xi)$ as the maximal number of linearly independent among the coordinates of vector $\xi$. The result of Proposition \ref{totirrat} can be generalized for not totally irrational vectors. The following statement and certain discussion can be found in \cite{Sch}.

\begin{proposition} \label{schl}
Suppose $\xi = (\xi_1, \dots, \xi_{d-1}, 1)$ and ${\rm dim}_{\mathbb{Q}}(\xi) = r \geq 3$. Then 
$$
\frac{\omega(\xi)}{\hat{\omega}(\xi)} \geq G_r(\hat{\omega}(\xi)).
$$
\end{proposition}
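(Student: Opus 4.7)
The plan is to reduce Proposition \ref{schl} to Proposition \ref{totirrat} by replacing $\xi$ with a totally irrational vector of smaller ambient dimension whose ordinary and uniform exponents agree with those of $\xi$. Since $\dim_{\mathbb{Q}}(\xi) = r$ and the coordinate $1$ is nonzero, a $\mathbb{Q}$-basis of the $\mathbb{Q}$-linear span of the coordinates of $\xi$ can be chosen to contain $1$. After a harmless reordering of $\xi_1, \dots, \xi_{d-1}$, I may assume that $1, \xi_1, \dots, \xi_{r-1}$ are $\mathbb{Q}$-linearly independent, and that each remaining coordinate decomposes as
$$
\xi_j = c_{j0} + c_{j1}\xi_1 + \dots + c_{j,r-1}\xi_{r-1}, \qquad c_{jk} \in \mathbb{Q}, \quad j = r, \dots, d-1.
$$
Setting $\eta = (\xi_1, \dots, \xi_{r-1}, 1) \in \mathbb{R}^r$, the vector $\eta$ is totally irrational, of the form (\ref{form}), and its ambient dimension $r$ satisfies $r \geq 3$, so Proposition \ref{totirrat} applies to it directly.

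The core claim is that $\omega(\xi) = \omega(\eta)$ and $\hat{\omega}(\xi) = \hat{\omega}(\eta)$. The bounds $\omega(\xi) \leq \omega(\eta)$ and $\hat{\omega}(\xi) \leq \hat{\omega}(\eta)$ are immediate, since restricting the maximum over $j = 1, \dots, d-1$ to $j = 1, \dots, r-1$ only decreases it, so any integer $q$ that realises a given approximation quality for $\xi$ realises at least the same quality for $\eta$.

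For the opposite inequalities I would clear denominators. Let $N$ be a positive integer such that $a_{jk} := N c_{jk} \in \mathbb{Z}$ for every $j, k$. Since each $a_{jk}$ is an integer and $N \xi_j - N c_{j0} = \sum_k a_{jk} \xi_k$, for any positive integer $Q$ the triangle inequality for $\parallel \cdot \parallel$ yields
$$
\parallel N Q \xi_j \parallel \leq \sum_{k=1}^{r-1} |a_{jk}|\, \parallel Q \xi_k \parallel \leq C \max_{1 \leq k \leq r-1} \parallel Q \xi_k \parallel
$$
for a constant $C$ independent of $Q$. Setting $q = NQ$, an approximation $\max_k \parallel Q\xi_k \parallel \leq Q^{-\gamma}$ witnessing the exponent of $\eta$ translates into $\max_j \parallel q \xi_j \parallel \leq C' q^{-\gamma}$, and the cosmetic loss $\gamma \to \gamma - \varepsilon$ absorbs the constant for large $q$. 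The same bookkeeping handles the uniform case: for every large $t$, pick $Q \leq t/N$ witnessing $\hat{\omega}(\eta)$ and convert it to $q = NQ \leq t$ witnessing (up to an arbitrarily small loss) the same exponent for $\xi$. Consequently $\omega(\xi) \geq \omega(\eta)$ and $\hat{\omega}(\xi) \geq \hat{\omega}(\eta)$, which completes the claimed equalities.

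Applying Proposition \ref{totirrat} to $\eta$ now gives $\omega(\eta)/\hat{\omega}(\eta) \geq G_r(\hat{\omega}(\eta))$, and substituting the equalities of exponents yields the required inequality. The main obstacle, modest as it is, is the denominator bookkeeping in the reduction step: the crux is that passing from an approximating $Q$ for $\eta$ to an approximating $q = NQ$ for $\xi$ inflates the approximation constant multiplicatively but leaves the exponent unchanged, so that the two vectors genuinely have the same $\omega$ and $\hat{\omega}$ despite living in different ambient dimensions.
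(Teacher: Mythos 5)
Your reduction is correct, and the paper itself offers no proof of this proposition --- it is quoted from the cited work of Schleischitz, where essentially the same reduction (restricting to a maximal $\mathbb{Q}$-linearly independent subset of the coordinates containing $1$ and clearing denominators) is the underlying idea. The two nontrivial points are both handled properly: the basis of the coordinate span can indeed be chosen to contain $1$, and the passage $Q \mapsto q = NQ$ only costs a multiplicative constant, which is absorbed into an arbitrarily small loss in the exponent, so $\omega$ and $\hat{\omega}$ are genuinely preserved and Proposition \ref{totirrat} applies verbatim to the truncated vector $\eta \in \mathbb{R}^r$.
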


If badly approximable linear subspace $\mathcal{L}$ is completely irrational, we can give a stronger upper bound for the uniform exponent of all vectors $\xi \in \mathcal{L}$. 

Let $\mathfrak{W}_{n,d}$ be the unique root of the equation
$$
x^{d-n+1} - w_{n,d}^{d-n-1} (1 + w_{n,d})x + w_{n,d}^{d-n} = 0
$$
in the interval $(0, w_{n,d})$.

\begin{theorem}\label{ourbound}
Suppose $\mathcal{L}$ is an $n-dimensional$ badly approximable completely irrational linear subspace of $\mathbb{R}^d$. Then for any $\xi \in \mathcal{L}$ of the form (\ref{form}) one has
\begin{equation}\label{thm34}
    \hat{\omega}(\xi) \leq \mathfrak{W}_{n,d}.
\end{equation}
\end{theorem}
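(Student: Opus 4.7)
The plan is to combine the two ingredients already laid out earlier---Proposition \ref{schl} (the Marnat--Moshchevitin transference inequality) and the bound $\omega(\xi) \leq w_{n,d}$ coming from badly approximability---with a rational-dimension lower bound that is forced by complete irrationality.

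The first step I would take is to show that every nonzero $\xi \in \mathcal{L}$ satisfies $\dim_{\mathbb{Q}}(\xi) \geq d-n+1$. For any $\eta \in \mathbb{R}^d$ the smallest rational subspace containing $\eta$ has dimension exactly $\dim_{\mathbb{Q}}(\eta)$, since the $\mathbb{Q}$-linear relations among the coordinates of $\eta$ cut out a rational subspace of that dimension. Now if $\dim_{\mathbb{Q}}(\xi)$ were at most $d-n$, I could enlarge this minimal rational subspace to a rational subspace of dimension exactly $d-n$ still containing $\xi$; this would produce a nontrivial intersection with $\mathcal{L}$, contradicting complete irrationality.

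Assuming $n \leq d-2$ (so that $r := \dim_{\mathbb{Q}}(\xi) \geq d-n+1 \geq 3$), Proposition \ref{schl} yields $\omega(\xi) \geq \hat{\omega}(\xi)\, G_r(\hat{\omega}(\xi))$, while inequality (\ref{ord}) gives $\omega(\xi) \leq w_{n,d}$. Combining them, $\hat{\omega}(\xi)\,G_r(\hat{\omega}(\xi)) \leq w_{n,d}$. The next step is a direct computation: substitute $g = G_r(\hat\omega)$ into its defining identity $g^{r-2}(1-\hat\omega) = \hat\omega(g^{r-3} + \cdots + 1)$, impose the boundary relation $\hat\omega\cdot g = w_{n,d}$, clear denominators, and telescope the geometric sum. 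This reduces, after simplification, to $\hat\omega^{r} - w_{n,d}^{r-2}(1+w_{n,d})\,\hat\omega + w_{n,d}^{r-1} = 0$, which for $r = d-n+1$ is exactly the equation defining $\mathfrak{W}_{n,d}$.

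To close the argument I would verify monotonicity: from the recursion $f_{r+1}(x) = x\,f_r(x) - \hat\omega$ satisfied by the defining polynomials of $G_r$, it follows that $G_{r+1}(\hat\omega) > G_r(\hat\omega)$ for all $\hat\omega \in (0,1)$, so the bound obtained from the actual $r$ is at least as strong as the one obtained from $r = d-n+1$, pinning down $\hat{\omega}(\xi) \leq \mathfrak{W}_{n,d}$. The part I expect to be the main obstacle is the algebraic manipulation matching the combined transference inequality to the explicit polynomial defining $\mathfrak{W}_{n,d}$; by comparison, the monotonicity check and the edge case $n = d-1$ (where $r \geq 2$ only, so Proposition \ref{schl} does not apply directly and a separate low-dimensional argument is needed) are minor side issues.
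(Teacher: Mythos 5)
Your proposal is correct and follows essentially the same route as the paper: complete irrationality forces a lower bound on $\dim_{\mathbb{Q}}(\xi)$, which feeds into Proposition \ref{schl}, and combining the resulting transference inequality with $\omega(\xi)\le w_{n,d}$ from (\ref{ord}) produces exactly the defining polynomial of $\mathfrak{W}_{n,d}$. Your count $\dim_{\mathbb{Q}}(\xi)\ge d-n+1$ is the one actually needed (the paper states $d-n$ but then works with the polynomial corresponding to $d-n+1$), and your explicit treatment of the monotonicity of $G_r$ in $r$ and of the boundary case $n=d-1$ (where $\mathfrak{W}_{d-1,d}=1$, so the bound is trivial) supplies steps the paper leaves implicit.
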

\noindent Particularly, if $\xi$ is totally irrational, it belongs to some $1$-dimensional completely irrational subspace, and we get (\ref{klw2}).
\begin{proof}
In view of the discussion above the proof is extremely simple. Consider any $\xi = (\xi_1, \dots, \xi_{d-1}, 1) \in \mathcal{L}$. It is clear that ${\rm dim}_{\mathbb{Q}}(\xi) \geq d-n$, so by Proposition \ref{schl} we have
$$
\frac{\omega}{\hat{\omega}} \geq G_{d-n}.
$$
It can be found in \cite{KlWeiss} that $G_{d-n}$ is a root of the polynomial 
$$
g(x) = (1-\hat{\omega}) x^{d-n} - x^{d-n-1} + \hat{\omega} = 0. 
$$
As in \cite{KlWeiss}, from this and (\ref{ord}) we come to an inequality
$$
(1-\hat{\omega}) w_{n,d}^{d-n} - w_{n,d}^{d-n-1} \hat{\omega} + \hat{\omega}^{d-n+1} \geq 0.
$$
This gives us (\ref{thm34}).
\end{proof}

To illustrate Theorem \ref{ourbound} we give the following simplest
\begin{example} \label{example}
Let $\mathcal{L}$ be a 2-dimensional (n=2) badly approximable linear subspace in $\mathbb{R}^4$. Then:

\begin{itemize}
    \item For any $\xi \in \mathcal{L}$ of the form (\ref{form}), from (\ref{klw1}) we have 
    $$
    \hat{\omega}(\xi) \leq w_{2, 4} = 1
    $$
    \item For any totally irrational $\xi \in \mathcal{L}$ of the form (\ref{form}) from (\ref{klw2}) we have 
    $$
    \hat{\omega}(\xi) \leq W_{2, 4} \approx 0,54 \dots
    $$
    \item If $\mathcal{L}$ is completely irrational, from Theorem \ref{ourbound} for any $\xi \in \mathcal{L}$ we have
    $$
    \hat{\omega}(\xi) \leq \mathfrak{W}_{2, 4} = \frac{\sqrt{5} - 1}{2} \approx 0,62 \dots
    $$
    where $\mathfrak{W}_{2, 4}$ is the only root of the equation 
    $$
    x^3 - 2x + 1
    $$
    in the interval $(0, w_{n,d}) = (0, 1).$
   It is stronger then the bound for an arbitrary badly approximable $\mathcal{L}$ (but, of course, weaker then for totally irrational vector $\xi$).
\end{itemize}
\end{example}
\subsection{About the dimension of subspace generated by best approximations}

Here we obtain some easy statements about dimension of subspace generated by best approximations to a completely irrational subspace. We follow the definitions and notation from \cite{lmos}. In particular, for a \textit{good} $n \times m$ matrix $\Theta$ the sequence of its \textit{best approximations} $\{ z_{\nu} \}_{\nu = 1}^{\infty}$ is unique and well defined. In fact, these vectors are the best approximations vectors for the linear subspace $\mathcal{L}$ of the form (\ref{theta}).

As in \cite{lmos}, for a good matrix $\Theta$ (or subspace $\mathcal{L}$) we define
$$
R(\Theta) = R(\mathcal{L}) = \min \{r: \text{ there exists a linear subspace } \mathcal{B} \subseteq \mathbb{R}^d, {\rm dim } \mathcal{B} = r, $$
$$
\text{ and } \nu_0 \in \mathbb{N}
\text{ such that } z_{\nu} \in \mathcal{B} \text{ for all } \nu \geq \nu_0 \}.
$$

If $\mathcal{L}$ is completely irrational, the following easy statement holds.

\begin{theorem} \label{dimofapp}
Suppose $\mathcal{L}$ is an $n$-dimensional good completely irrational subspace in $\mathbb{R}^d$ of form (\ref{theta}). Then 
$$
R(\Theta) \geq d-n+1.
$$
\end{theorem}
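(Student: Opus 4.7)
The plan is to argue by contradiction. Suppose $R(\Theta) \leq d-n$, so that there exist a subspace $\mathcal{B} \subseteq \mathbb{R}^d$ of dimension $r \leq d-n$ and an index $\nu_0$ with $z_\nu \in \mathcal{B}$ for every $\nu \geq \nu_0$. Since each $z_\nu$ is an integer vector, I would replace $\mathcal{B}$ by the $\mathbb{R}$-span of $\{z_\nu : \nu \geq \nu_0\}$; this new subspace is rational (it has a basis of integer vectors), still contains the tail of the sequence of best approximations, and has dimension at most $r \leq d-n$. Denote it again by $\mathcal{B}$.

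Next, I would extract a limit direction from the best approximations. Write $z_\nu = (q_\nu, p_\nu)$ with $q_\nu \in \mathbb{Z}^n$, $p_\nu \in \mathbb{Z}^m$; the sequence of best approximations has $|z_\nu| \to \infty$ while ${\rm dist}(z_\nu, \mathcal{L})$ is controlled by $|p_\nu - \Theta q_\nu|$, which stays bounded (and in fact tends to zero) by the very definition of best approximations to $\mathcal{L}$ given in \cite{lmos}. Therefore
$$
\frac{{\rm dist}(z_\nu, \mathcal{L})}{|z_\nu|} \xrightarrow[\nu \to \infty]{} 0.
$$
The unit vectors $z_\nu/|z_\nu|$ all lie in $\mathcal{B}$, so by compactness some subsequence converges to a unit vector $v \in \mathcal{B}$. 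The previous limit forces ${\rm dist}(v, \mathcal{L}) = 0$, i.e.\ $v \in \mathcal{L}$. Hence $\mathcal{B} \cap \mathcal{L} \supseteq \mathbb{R} v \neq \{0\}$.

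To finish, if $\dim \mathcal{B} < d-n$ I would extend $\mathcal{B}$ to a rational subspace $\mathcal{B}'$ of dimension exactly $d-n$ by adjoining integer vectors; clearly $\mathcal{B}' \cap \mathcal{L} \supseteq \mathcal{B} \cap \mathcal{L} \neq \{0\}$. This contradicts the hypothesis that $\mathcal{L}$ is completely irrational, i.e.\ $(d-n)$-irrational, which asserts that $\mathcal{L}$ meets every $(d-n)$-dimensional rational subspace trivially.

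The only nonformal step is the limit argument: one must justify that the distances ${\rm dist}(z_\nu, \mathcal{L})$ are negligible compared to $|z_\nu|$ for the sequence of best approximations. This is precisely where the hypothesis that $\Theta$ is \textit{good} (in the sense of \cite{lmos}, so that best approximations are well defined and $|z_\nu| \to \infty$) enters; once this is in place, the rest is a short compactness and dimension-counting argument, and the irrationality condition $(d-n)$-irrationality is exactly what is needed to close the argument.
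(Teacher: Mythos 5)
Your proposal is correct and is essentially the paper's argument: both rest on the fact that the best approximation vectors have $\mathrm{dist}(z_\nu,\mathcal{L})$ bounded while $|z_\nu|\to\infty$, so a $(d-n)$-dimensional rational subspace containing the tail would have to meet $\mathcal{L}$ nontrivially, contradicting complete irrationality. The paper phrases the compactness step as a positive minimal distance between $\mathcal{B}\cap S$ and $\mathcal{L}\cap S$ (forcing $\mathrm{dist}(z_\nu,\mathcal{L})\to\infty$), whereas you extract a limit direction $v\in\mathcal{B}\cap\mathcal{L}$ — the same argument viewed from the other side, with your explicit rationalization and extension of $\mathcal{B}$ being a welcome but routine tidying-up.
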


\begin{proof}
Suppose all the best approximation vectors to $\mathcal{L}$ (starting from some $\nu_0$) lie in $(d-n)$-dimensional (rational) subspace $\mathcal{B}$. Let $S$ be a unit sphere having $0$ as its center, $d = \min\limits_{x \in \mathcal{B} \cap S, y \in \mathcal{L} \cap S} |x - y| > 0.$ Then 
$$
{ \rm dist }(z_{\nu}, \mathcal{L}) \geq d \cdot |z_{\nu}| \xrightarrow[\nu \rightarrow \infty]{} \infty,
$$
 hence $\{ z_{\nu} \}$ is not a sequence of the best approximations. 
\end{proof}

As an example, consider 2-dimensional subspace $\mathcal{L}$ in $\mathbb{R}^4$. The following proposition contains in Corollary 4 of Theorem 7 from \cite{lmos}.

\begin{proposition}
Suppose $2 \times 2$-matrix $\Theta$ of two-dimensional subspace $\mathcal{L}$ in $\mathbb{R}^4$ is good and $\mathcal{L}$ is not contained in a rational 3-dimensional subspace of $\mathbb{R}^4$. Then $R(\Theta) = 2$ or $R(\Theta) = 4.$
\end{proposition}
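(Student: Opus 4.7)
The plan is to rule out the intermediate values $R(\Theta) = 1$ and $R(\Theta) = 3$; since $R(\Theta) \in \{1, 2, 3, 4\}$ a priori, this suffices. The value $R(\Theta) = 1$ is immediate: if every $z_\nu$ with $\nu \geq \nu_0$ lay on a single rational line $\ell'$, then for $\mathrm{dist}(z_\nu, \mathcal{L}) \to 0$ to hold one would need $\ell' \subset \mathcal{L}$, and the integer points on $\ell'$ would then belong to $\mathcal{L}$ itself, contradicting the assumption that the $z_\nu$ are genuine approximations; equivalently, for good $\Theta$ consecutive $z_\nu$, $z_{\nu+1}$ are linearly independent, which already forces $R(\Theta) \geq 2$.

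The crux is to exclude $R(\Theta) = 3$. Assume for contradiction that a $3$-dimensional subspace $\mathcal{B}$ contains every $z_\nu$ for $\nu \geq \nu_0$. Since the $z_\nu$ are integer vectors, $\mathcal{B}$ must be rational, and by hypothesis $\mathcal{L} \not\subset \mathcal{B}$, so $\dim(\mathcal{L} \cap \mathcal{B}) \in \{0, 1\}$. If $\mathcal{L} \cap \mathcal{B} = \{0\}$, the compactness argument from the proof of Theorem \ref{dimofapp} applies verbatim inside $\mathcal{B}$: $\mathrm{dist}(z, \mathcal{L}) \geq c\,|z|$ for every $z \in \mathcal{B}$, which is incompatible with $\mathrm{dist}(z_\nu, \mathcal{L}) \to 0$.

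The remaining case $\dim(\mathcal{L} \cap \mathcal{B}) = 1$ is where the real work is. Set $\ell = \mathcal{L} \cap \mathcal{B}$ and decompose $\mathcal{B} = \ell \oplus (\ell^\perp \cap \mathcal{B})$; a short projection computation gives $\mathrm{dist}(z, \mathcal{L}) \asymp \mathrm{dist}(z, \ell)$ for all $z \in \mathcal{B}$. So $R(\Theta) = 3$ would force the $z_\nu$, viewed inside the rank-$3$ lattice $\Lambda := \mathbb{Z}^4 \cap \mathcal{B}$, to approximate the line $\ell$ at the rate $|z_\nu|^{-1}$ that Dirichlet's theorem produces for $\mathcal{L}$ in $\mathbb{R}^4$. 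I would contrast this with the intrinsic Minkowski bound for approximating a line in a rank-$3$ lattice, which is only $|z|^{-1/2}$ unconditionally, and then apply Dirichlet in all of $\mathbb{Z}^4$ to produce, for arbitrarily large $T$, integer vectors of norm at most $T$ approximating $\mathcal{L}$ at precision $\ll T^{-1}$ that necessarily fall outside $\mathcal{B}$, contradicting $R(\Theta) = 3$.

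The hard part is the exceptional case in which $\ell$ is abnormally well-approximable inside $\Lambda$---so well that the intrinsic approximations already match the ambient Dirichlet rate and a naive rate comparison fails. I expect the main obstacle will be converting the global hypothesis that $\mathcal{L}$ lies in no rational $3$-dimensional subspace into a uniform Diophantine exclusion, over all candidate lines $\ell$ and rational hyperplanes $\mathcal{B}$ that might host the sequence of best approximations; at this point one likely has to appeal to the fine structure of best approximations to good matrices developed in \cite{lmos}.
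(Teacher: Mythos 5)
Your proposal has a genuine gap, and you have in fact located it yourself: the case $\dim(\mathcal{L}\cap\mathcal{B})=1$ is not actually proved. The reductions you do carry out are sound --- excluding $R(\Theta)=1$, excluding a transversal three-dimensional $\mathcal{B}$ by the compactness argument of Theorem \ref{dimofapp}, and the observation that ${\rm dist}(z,\mathcal{L})\asymp{\rm dist}(z,\ell)$ on $\mathcal{B}$ when $\ell=\mathcal{L}\cap\mathcal{B}$ is a line. But the rate comparison you then sketch cannot be closed as stated: Dirichlet in $\mathbb{Z}^4$ shows the best approximations to $\mathcal{L}$ decay like $|z|^{-1}$, while Minkowski gives only the \emph{generic} rate $|z|^{-1/2}$ for approximating a line by the rank-$3$ lattice $\mathbb{Z}^4\cap\mathcal{B}$; nothing prevents the particular line $\ell$ from being abnormally well approximable inside that lattice, and the hypothesis that $\mathcal{L}$ lies in no rational hyperplane imposes no Diophantine condition on $\ell$ at all ($\ell$ is merely the slice of $\mathcal{L}$ by $\mathcal{B}$ and is in general an arbitrary irrational line). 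So the contradiction you aim for does not materialize in precisely the exceptional case, which is the only case where real work is required.

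For comparison: the paper does not prove this proposition at all --- it is quoted as Corollary 4 of Theorem 7 of \cite{lmos}, so there is no in-paper argument to measure yours against. The argument in \cite{lmos} does not proceed by comparing decay rates; it relies on structural properties of the sequence of best approximation vectors (linear independence of consecutive vectors, the sublattices and induced lattices they generate inside a putative envelope $\mathcal{B}$) to rule out the three-dimensional case under the stated hypothesis. To complete your proof you would need an input of that structural kind; a pure rate count will not suffice.
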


As completely irrational two-dimensional subspace in $\mathbb{R}^4$ can not lie in any 3-dimensional rational subspace, from Theorem \ref{dimofapp} we immediately deduce

\begin{corollary}
Suppose $2 \times 2$-matrix $\Theta$ of two-dimensional completely irrational subspace $\mathcal{L}$ in $\mathbb{R}^4$ is good. Then $R(\Theta) = 4.$
\end{corollary}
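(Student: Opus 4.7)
The plan is to combine the proposition just stated (which restricts $R(\Theta)$ to the values $2$ or $4$) with Theorem \ref{dimofapp} applied to our situation (which lower-bounds $R(\Theta)$), and observe that the only value compatible with both is $4$.

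First I would verify the hypothesis of the preceding proposition, namely that a completely irrational two-dimensional $\mathcal{L}\subset\mathbb{R}^4$ cannot lie inside any rational three-dimensional subspace $\mathcal{R}$. If it did, then any rational two-dimensional subspace $\mathcal{R}'\subset\mathcal{R}$ would satisfy $\dim(\mathcal{L}\cap\mathcal{R}')\geq 2+2-3=1$ inside the ambient $\mathcal{R}$, so $\mathcal{L}$ would have nontrivial intersection with a two-dimensional rational subspace, contradicting $2$-irrationality ($=$ complete irrationality in this case). This is precisely the observation the paper makes in the sentence preceding the corollary, and it lets me invoke the proposition to conclude $R(\Theta)\in\{2,4\}$.

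Next I would apply Theorem \ref{dimofapp} with $n=2$ and $d=4$, which yields
\[
R(\Theta)\geq d-n+1=3.
\]
Combined with $R(\Theta)\in\{2,4\}$, this forces $R(\Theta)=4$, as claimed.

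There is essentially no obstacle: both ingredients are already in hand, so the proof is a one-line deduction once the preliminary observation about rational $3$-dimensional subspaces is noted. The only thing worth being careful about is confirming that "completely irrational" in dimension $(n,d)=(2,4)$ really does rule out containment in a rational $3$-plane, which is the small geometric argument above.
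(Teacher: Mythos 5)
Your proposal is correct and follows exactly the paper's route: the paper likewise notes that a completely irrational two-dimensional subspace of $\mathbb{R}^4$ cannot lie in a rational $3$-dimensional subspace (so the preceding proposition gives $R(\Theta)\in\{2,4\}$) and then invokes Theorem \ref{dimofapp} to exclude the value $2$. Your dimension-count justification of the preliminary observation is a correct filling-in of a step the paper leaves implicit.
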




\begin{acknowledgement}
The author thanks Nikolay Moshchevitin for much useful advice and support.

The paper was published with the financial support of the Ministry of Education and Science of the Russian Federation as part of the program of the Mathematical Center for Fundamental and Applied Mathematics under the agreement  075-15-2019-1621
\end{acknowledgement}


%
\end{document}